\newcommand{\comment}[1]{}
\newcommand{\R}{\mathbb{R}}
\newcommand{\N}{\mathbb{N}}
\newcommand{\E}{\mathbb{E}}
\newcommand{\Z}{\mathbb{Z}}
\newcommand{\pp}{\mathbb{P}}
\newcommand\kA{\mathcal{A}}
\newcommand\kB{\mathcal{B}}
\newcommand\kE{\mathcal{E}}
\newcommand\kK{\mathcal{K}}
\newcommand\kF{\mathcal{F}}
\newcommand\kG{\mathcal{G}}
\newcommand\kH{\mathcal{H}}
\newcommand\kI{\mathcal{I}}
\newcommand\kL{\mathcal{L}}
\newtheorem {theo} {Theorem} [section]
\newtheorem{lem}[theo]{Lemma}
\newtheorem{prop}[theo]{Proposition}
\newtheorem{expl}[theo]{Example}
\newcommand\si{\sigma}
\def\Ga{\Gamma}
\def\Th{\Theta}
\newcommand\beq{\begin{equation}}
\newcommand\eeq{\end{equation}}
\newcommand\p{\varphi}
\def\l{\ell}
\def\ep{\varepsilon}
\def\al{\alpha}
\def\wt{\widetilde}
\def\wh{\widehat}
\numberwithin{equation}{section}
\title{Random walks with occasionally modified transition probabilities}
\author{Olivier Raimond}
\address{Laboratoire Modal'X, Universit\'e Paris Ouest Nanterre La D\'efense, B\^atiment G, 200 avenue de la R\'epublique 92000 Nanterre, France.}
\email{olivier.raimond@u-paris10.fr}
\author{Bruno Schapira}
\address{D\'epartement de Math\'ematiques, B\^at. 425, Universit\'e Paris-Sud 11, F-91405 Orsay, cedex, France. }
\email{bruno.schapira@math.u-psud.fr}
\keywords{Self-interacting random walk; Excited random walk;
Weak law of large numbers; Cauchy law}
\subjclass[2000]{60F05; 60K35}
\begin{document}
\begin{abstract} We study recurrence properties and the validity of
the (weak) law of large numbers for (discrete time) processes which, in
the simplest case, are obtained from simple symmetric random walk on
$\Z$ by modifying the distribution of a step from a fresh point.
If the process is denoted as $\{S_n\}_{n \ge 0}$, then the conditional
distribution of $S_{n+1} - S_n$ given the past through time $n$ is
the distribution of a simple random walk step, provided $S_n$ is at a
point which has been visited already at least once during $[0,n-1]$.
Thus in this case $P\{S_{n+1}-S_n = \pm 1|S_\ell, \ell \le n\} =
1/2$. We denote this distribution by $P_1$. However, if $S_n$ is at a
point which has not been visited before time $n$, then we take for
the conditional distribution of $S_{n+1}-S_n$, given the past, some
other distribution $P_2$. We want to decide in specific cases whether
$S_n$ returns infinitely often to the origin and whether $(1/n)S_n
\to 0$ in probability. Generalizations or variants of the $P_i$ and
the rules
for switching between the $P_i$ are also considered.
\end{abstract}
\maketitle

\section{Introduction}
There have been a number of investigations of recurrence/transience
of "slightly perturbed" random walks. Roughly speaking we are
thinking of processes (in discrete time) whose transition
probabilities are "usually" equal to a given transition probability,
but "occasionally" make a step according to a different transition
probability.
Arguably the most challenging of these problems is the
question of recurrence vs transience of "once reinforced" simple
random walk on $\Z^d$. In the vertex version of this process, the
walk moves at the $\l$-th step from a vertex $x$ to a neighbor $x+y$
with a probability proportional to a weight $w(\l,x+y)$. All these
weights start out with the value 1, but then $w(\l,x+y)$ is increased
to $1+C$ for a given constant $C > 0$ at the first time $\l$ at which
the walk visits $x+y$. After this change the weight of $x+y$ does not
change, that is, $w(m,x+y) = 1+C$ for all $m$ greater than the time
of the first visit to $x+y$ by the process. In general little is
known so far about recurrence or transience of such processes (except
on $\Z$; see \cite{MR} for some recent results).
Other examples include excited or cookie random walks on $\Z^d$, introduced by
Benjamini and Wilson \cite{BW}, which at first visits to a site have a bias in
some fixed direction and at further visits
make a simple random walk step. These
processes have now been well studied in dimension $1$
(see \cite{D} and \cite{KM} for recent results and references therein),
but much less is known in higher dimension.

Benjamini proposed the study of random walks which are perturbed in a
somewhat different manner. We describe a slightly generalized version
of his setup. Let $P_1, P_2,\dots,P_k$, be $k \le \infty$ probability
distributions on $\R$ or on $\Z$, with zero-mean if they have finite first moment, or symmetric. Intuitively speaking we now
consider a process $S_n = S_0 + \sum_{\l=1}^n X_\l, ~n \ge 0$, for
which the $X_\l$ are chosen in two steps. First we choose an index
$i(\l) \in \{1,2,\dots,k\}$ and then, given the past through time $\l-1$
and $i(\l)$, the conditional distribution of $X_\l$ is taken to be
$P_{i(\l)}(\cdot)$. More formally, if we set
$\kH_n=\sigma((S_\l,i(\l))_{\l\le n})\vee \sigma(S_0)$, then we have for all $n\ge 0$:
\begin{equation}
\label{loiconditionnelle}
\hbox{ the conditional law of } X_{n+1} \hbox{ given } \kH_n\vee
\sigma(i(n+1)) \hbox{ is } P_{i(n+1)}.
\end{equation}
Condition \eqref{loiconditionnelle} is not enough to describe the law of
$(S_0,(S_n,i(n))_{n\ge 1})$. This law will be completely described once the way
 the sequence $i(n)$ is chosen will be given, or equivalently once the
 conditional law of $i(n+1)$ given $\kH_n$ will be given. One way is to
 choose $i(n+1)$ such that it is $\kH_n$-measurable, in which case there
 exists $f: \R\times\cup_{n\ge 0} (\R\times \{1,\dots,k\})^n \to \{1,\dots,k\}$
 such that $i(n+1)=f(S_0,(S_\l,i(\l))_{\l\le n})$. In general, there can be added
 an extra randomness in the choice of $i(n+1)$, in which case, the
 conditional law of $i(n+1)$ given $\kH_n$ is a law $\nu_n$
 which is a function of $(S_0,(S_\l,i(\l))_{\l\le n})$. Such laws can be described
 by mean of a random variable
 $A_{n+1}$ uniformly distributed on $[0,1]$, independent of $\kH_n$,
 and a function $F : [0,1] \times \R\times\cup_{n\ge 0} (\R\times \{1,\dots,k\})^n
 \to \{1,\dots,k\}$, such that $\nu_n$ is the conditional law of
 $F(A_{n+1},S_0,(S_\l,i(\l))_{\l\le n})$ given $\kH_n$. We use here the convention $(\R\times \{1,\dots,k\})^0=\emptyset$. 
 Note also that there exists a measurable function $G:[0,1]\times \{1,\dots,k\} \to \R$
 such that if $B$ is a random variable uniformly distributed on $[0,1]$,
 then $P_i$ is the law of $G(B,i)$. This function $G$ will be fixed later on.

A convenient way to construct processes satisfying \eqref{loiconditionnelle} will be
to start from independent sequences of independent random variables uniformly
distributed on $[0,1]$, $(A_n)_{n\ge 1}$ and $(B_n)_{n\ge 1}$, an independent
random variable $S_0$,  and a measurable function
$F : [0,1] \times \R\times\cup_{n\ge 0} (\R\times \{1,\dots,k\})^n \to \{1,\dots,k\}$
(which describes how we choose the law to be used for the next jump). We then define
$(S_n,i(n))$ recursively: for $n\ge 0$,
\begin{eqnarray}
\label{conditioni}
\left\{
\begin{array}{ll}
i(n+1)=F(A_{n+1},S_0,(S_\l,i(\l))_{\l\le n})\\
S_{n+1}-S_n=G(B_{n+1},i(n+1)).
\end{array}\right.
\label{unif}
\end{eqnarray}
Note that all processes $(S_0,(S_n,i(n))_{n\ge 1})$ satisfying \eqref{loiconditionnelle}
are equal in law to a process $(S_0,(S_n,i(n))_{n\ge 1})$ defined by
\eqref{conditioni} for a particular choice of function $F$. The law of this
process is thus given by the law $\mu_0$ of $S_0$, the function $F$ and
the sequence $\{P_1,...,P_k\}$. We denote the law of this process $\pp_{F,\mu_0}$ and simply by $\pp_{F,S_0}$ when $S_0$ is not random.

Another way to construct $(S_n,i(n))$  is as follows. This construction will
be used in the last section of this paper.
Fix $S_0$ in some way and let $\{Y(i,n), 1\le
i \le k,n \ge 1\}$ be a family of independent random variables such
that each $Y(i,n)$ has distribution $P_i$. These $Y(i,n)$ can be
chosen before any $i(\l)$ is determined. Now define inductively
$$
j(i,\l)= 1 + \text{ number of times $P_i$ has been used during
}[1,\l],
$$
and take for $n\ge 0$,
$$
X_{n+1}= S_{n+1} - S_n= Y\big(i(n+1),j(i(n+1),n)\big).
$$
We chose this terminology because we think of the sequence $Y(i,1),
Y(i,2),\dots$ as a supply of variables with distribution $P_i$, and
every time $i(\l)=i$ we "use" one of these variables. When we come to
pick the $Y$ variable at time $n+1$ according to $P_{i(n+1)}$ we use
the first $Y(i(n+1),\cdot)$ which has not been used yet. This is
automatically independent of all $Y$'s used by time $n$. 
We define
$$
\kF_0 = \si\text{-field generated by }S_0,
$$
and
\begin{eqnarray*}
\begin{array}{c}
\kF_{n+1} = \si\text{-field generated by }S_0,\;\kF_n,\;i(n+1),\\
\text{and }Y\big(i(n+1), j(i(n+1),n)\big).
\end{array}
\end{eqnarray*}

Once we have observed the variables which generate $\kF_n$ we first
determine $i(n+1)$ by some rule. This rule may be randomized, but
will actually be deterministic in our examples. Then
$j\big(i(n+1),n\big)$ is also determined by $i(n+1)$ and $\kF_n$-
measurable functions. Finally we determine $Y\big( i(n+1),n\big)$
and that completes the generators of $\kF_{n+1}$. The only conditions
on the rule for choosing $i(n+1)$ are that, conditionally on $\kF_n$,
all the random variables $i(n+1)$ and $\{Y(i,j),j > j(i,n), 1\le i
\le k\}$, are independent, with each $Y(i,j)$ with $j > j(i,n)$
having conditional distribution $P_i$.

Note that if all $P_i$, $i\le k$, have finite first moment (and zero-mean),
then $S_n$ automatically satisfies the strong law of large numbers, i.e.
$S_n/n\to 0$ almost surely, as soon as the tails of the $P_i$ are dominated
by some fixed distribution with finite first moment (see Lemma 1 in \cite{Kes}).
However the question of recurrence or transience of $S_n$ is much more
delicate, even when $k=2$. In particular in \cite{DKL}, Durett, Kesten and Lawler
exhibit examples where $S_n$ is transient (see also \cite{KL} for some
necessary conditions for transience).

Benjamini's questions concerned the case when $k=2$, $P_1$ puts mass
$1/2$ on each of the points $+1$ and $-1$, while $P_2$ is a symmetric
distribution on $\Z$ in the domain of normal attraction of a
symmetric Cauchy law (in particular $P_2$ does not have finite first moment).
As for the $i(n)$, Benjamini made the following choices:
$i(n+1) = 2$ if $S_n$ is at a "fresh" point, i.e., if at time $n$ the
process is at a point which it has not visited before. If $S_n$ is at
a position which it has visited before take $i(n+1) =1$. Thus his
process is a perturbation of simple random walk; it takes a special
kind of step from each fresh point but is simple random walk
otherwise. His principal questions were whether the process $\{S_n\}$
is recurrent and whether it satisfies the weak law of large numbers,
i.e., whether $(1/n)S_n \to 0$ in probability.

In Section 2.1 we present a general method to attack this kind of problems, 
which allows us to answer Benjamini's first question affirmatively (see Example \ref{ex1}). 
Our principal tool is a coupling between the $S_n$ of Benjamini's process and a
Cauchy random walk. The latter is a random walk with i.i.d. steps,
all of which have a symmetric distribution $P$ on $\Z$ which
is in the domain of normal attraction of the symmetric Cauchy law.
Unfortunately, so far our method works only for very specific $P$,
including the distribution of the first return position to the
horizontal axis of symmetric simple random walk on $\Z^2$. It
seems that even asymptotically small changes in $P$ cannot be
handled by this method.
In Section 2.2 we present an analogous method in a continuous setting,
i.e., when $P$ is a distribution on $\R$, and prove in particular that
if $P$ is the usual Cauchy law (with density $1/\pi(1+y^2)$), then
$\{S_n\}$ is recurrent (see Proposition \ref{continuousex}).

Our coupling technique permits also to give sufficient
criteria for the process $\{S_n\}$ to be transient.

In the last section we prove a weak law of large numbers for Benjamini's process.

\medskip
{\bf Acknowledgements} Part of the results presented here were obtained with Harry Kesten, and we thank him for his 
great help. We are also grateful to Itai Benjamini for
suggesting the problems considered in this paper and for his
insightful comments which helped to solve some of them.

\section{Coupling method}
In this section
one wants to construct the process $\{S_n\}$ coupled
to another process. If such a coupling exists, then $\{S_n\}$
automatically is recurrent (transience properties will also be considered). The problem now is whether the required
coupling exists. The next subsections describe the desired coupling.

\subsection{Discrete case}
\subsubsection{Successfull coupling}
The following properties, which a Markov chain with
transition matrix $Q$ on $\Z^2$ may or may not have,
will be useful. If $(U,V)$
is a Markov chain on $\Z^2$ starting from $(0,0)$, with
transition matrix $Q$, let
$$
T= T(Q):=\inf\{n>0: V(n)=0\}.
$$
If
\begin{eqnarray}
\label{Tfini} T \text{ is a.s. finite,}
\end{eqnarray}
then the law $P$ of $U_T$ is well defined. This
$P$ equals  $\p(Q)$ for some function $\p$.
The following definition may differ slightly from the definition
the reader knows. A process on $\Z$ is said to be {\it recurrent} if for any $u \in
\Z$ the process visits $u$ infinitely often. We say
that a law $P$ on $\Z$ is recurrent, if the random walk whose steps
have distribution $P$ is recurrent.

Another property is invariance under horizontal
translations, that is,
\begin{eqnarray}
\label{3.14} Q[(u,v),(u',v')]=Q[(0,v),(u'-u,v')] \quad
\text{for all }
(u,u',v,v'). \end{eqnarray}

The next property is that $Q$ can be coupled with a certain given
transition matrix $Q_0$ on $\Z^2$ in such a way that "paths
chosen according to $Q$ lie below paths chosen according to $Q_0$."
The precise meaning of this is that \eqref{3.15} below holds. Assume that
$Q$ and $Q_0$ are translation invariant in the sense of \eqref{3.14}.
We say that $Q$ can be {\it successfully coupled with} $Q_0$ if there
exists a transition matrix $\wh{Q}$ on $\Z^3$ such that
\beq \label{3.16}
\sum_{w'}\wh{Q}((u,v,w),(u',v',w'))=Q_0((u,v),(u',v')) \quad
\text{for all } (u,v,w,u',v'), \eeq
\beq \label{3.17}
\sum_{v'}\wh{Q}((u,v,w),(u',v',w')) =Q((u,w),(u',w')) \quad
\text{for all } (u,v,w,u',w'), \eeq
and
\begin{eqnarray} \label{3.15}
&\wh{Q}((u,v,w),(u',v',w'))=0 \quad \text{for all $(u,v,w,u',v',w')$ }\\
\nonumber &\qquad  \text{such that $|w|\leq |v|+1$ and $|w'|>|v'|+1$}.
\end{eqnarray}
In this case we say that $Q$ is {\it successfully coupled with $Q_0$ by $\wh{Q}$}.
Condition \eqref{3.15} implies that if $(U,V,W)$ is a Markov chain
with transition matrix $\wh{Q}$ such that $|W_0|\le |V_0|+1$, then a.s.
for all $n\ge 0$, $|W_n|\le |V_n|+1$. Condition \eqref{3.16} (resp.
\eqref{3.17}) implies that $(U,V)$ (resp. $(U,W)$) is a Markov chain
with transition matrix $Q_0$ (resp. $Q$). Note finally, even though this
will not be needed, that \eqref{3.16} with
\eqref{3.17} implies that $U$ is a Markov chain.

\vspace{0.2cm}
Let $(U,W)$ be a Markov chain on $\Z^2$ with transition matrix
$Q$. In order to prove recurrence properties, we shall need a kind of
irreducibility condition. Set
\begin{eqnarray}
\label{kB}
\kB &:= & \left\{(U,W) \text{ visits the horizonal axis at some time
$\ge 1$} \right. \\
 & & \nonumber \hspace{2.5cm} \left. \text{and does so first at the origin }\right\},
\end{eqnarray}
and for $p > 0$, write $C(p) = C(p,Q)$ for the property
\begin{eqnarray}
\label{C(p)}
\begin{array}{c}
 Q^*_\pm\{\kB\} \ge p\;,
\end{array}
\end{eqnarray}
where $Q^*_+$ (resp. $Q^*_-$) denotes the law of the Markov chain $(U,W)$
when it starts at $(U_0,W_0)=(0,1)$ (resp. when it starts at $(U_0,W_0) = (0,-1)$).
This property will be used to prove certain stopping times (the
$\tau_i$ below) are finite. We remind the reader that recurrence is
defined in the lines right after \eqref{Tfini}.
\begin{lem}
\label{irred}
Let $Q$ and $Q_0$ be two translation invariant transition matrices on
$\Z^2$, in the sense of \eqref{3.14}, such that $Q$ is successfully coupled with $Q_0$. Assume that \eqref{Tfini} holds for $Q_0$.
If $\p(Q_0)$ is recurrent, and $\eqref{C(p)}$ holds for $Q$ for some $p>0$, then \eqref{Tfini} holds for $Q$ and $\p(Q)$ is recurrent.
\end{lem}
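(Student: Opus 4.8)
The plan is to run the coupling $\wh{Q}$ from the origin $(0,0,0)$, so that the pair $(U,V)$ is a $Q_0$-chain started at $(0,0)$ while the pair $(U,W)$ is a $Q$-chain started at $(0,0)$, and to reduce both conclusions to the single statement that the $Q$-chain $(U,W)$ returns to the origin $(0,0)$ at some time $\ge 1$ almost surely. Such a return forces $W$ to hit $0$, which is \eqref{Tfini} for $Q$; and since recurrence of $\p(Q)$ amounts to recurrence of the random walk obtained by sampling $U$ at the successive return times of $W$ to $0$, one almost sure hit of $(0,0)$ (an event of the form $W=0$ and $U=0$ together), combined with the i.i.d.\ regeneration of that embedded walk, upgrades to recurrence of $\p(Q)$.

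First I would produce infinitely many instants at which the chain is forced close to the horizontal axis. Since $\p(Q_0)$ is recurrent and \eqref{Tfini} holds for $Q_0$, the $Q_0$-chain visits $(0,0)$ at infinitely many finite times $\rho_1<\rho_2<\cdots$. At each $\rho_j$ we have $U_{\rho_j}=0$ and $V_{\rho_j}=0$, so the domination built into \eqref{3.15} yields $|W_{\rho_j}|\le|V_{\rho_j}|+1=1$, i.e.\ $M_j:=W_{\rho_j}\in\{-1,0,1\}$. By the strong Markov property of $\wh{Q}$ at the stopping times $\rho_j$, the sequence $(M_j)$ is a Markov chain on the three-point set $\{-1,0,1\}$; moreover, because $U_{\rho_j}=0$, the $Q$-marginal restarted at $\rho_j$ has law $Q^*_{\pm}$ on $\{M_j=\pm1\}$, which is exactly the starting configuration of $C(p)$. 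Both the shared first coordinate $U$ (via \eqref{3.16}--\eqref{3.17}) and the axis-domination are used at this point.

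The crux is to convert $\eqref{C(p)}$, which only guarantees an eventual hit of $(0,0)$, into a strictly positive probability of hitting $(0,0)$ within a single excursion. Delimit excursions by consecutive $Q_0$-returns to $(0,0)$ and set $\chi(w):=P(\text{the }Q\text{-chain hits }(0,0)\text{ in }(\rho_0,\rho_1]\mid W_{\rho_0}=w)$ for $w=\pm1$. I claim $\chi(\pm1)>0$. If instead $\chi(1)=0$, then from $(0,0,1)$ the chain makes no hit of $(0,0)$ during the first excursion; in particular $M_1\neq0$ (since $M_1=0$ is itself a hit at $\rho_1$), so $M_1=\pm1$, and iterating via the strong Markov property shows the $Q$-chain never reaches $(0,0)$ at all. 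This contradicts $\eqref{C(p)}$, because $\kB\subset\{(U,W)\text{ reaches }(0,0)\text{ at a time}\ge1\}$ forces probability at least $p>0$ from $(0,1)$. This positivity step is the main obstacle: passing from "eventually" to "within one bounded excursion" is precisely where the finiteness of the state space $\{-1,0,1\}$ of $(M_j)$ and its Markov property are indispensable, since they let an eventual hit be decomposed into per-excursion hits without the overlapping that would defeat a naive one-shot bound. Set $q:=\min(\chi(1),\chi(-1))>0$.

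Finally I would conclude with the conditional Borel--Cantelli lemma already invoked in the proof of Theorem \ref{L1}. Put $E_j=\{\text{the }Q\text{-chain hits }(0,0)\text{ in }(\rho_j,\rho_{j+1}]\}$, which is $\kF_{\rho_{j+1}}$-measurable and satisfies $P(E_j\mid\kF_{\rho_j})=\chi(M_j)$. On the event $N$ that $(U,W)$ never reaches $(0,0)$ at a time $\ge1$ we have $M_j\neq0$, hence $M_j=\pm1$ and $\chi(M_j)\ge q$ for every $j$, so $\sum_j P(E_j\mid\kF_{\rho_j})=\infty$. L\'evy's extension of Borel--Cantelli then makes $E_j$ occur infinitely often on $N$, contradicting the definition of $N$; therefore $P(N)=0$. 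This establishes \eqref{Tfini} for $Q$, and since the embedded walk returns to $0$ almost surely and regenerates at each such return, $\p(Q)$ is recurrent.
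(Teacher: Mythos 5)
There is a genuine gap, and it sits exactly at the step you yourself flag as the crux: the passage from \eqref{C(p)} to the uniform per-excursion bound $q=\min(\chi(1),\chi(-1))>0$. Your contradiction argument does not deliver this. Assuming only $\chi(1)=0$, the iteration breaks down at the first $j$ with $M_j=-1$: from there hits of $(0,0)$ may well occur with positive probability, so you cannot conclude that the chain ``never reaches $(0,0)$ at all''. Run correctly, the argument only rules out $\chi(1)=\chi(-1)=0$ simultaneously, i.e.\ it yields $\max(\chi(1),\chi(-1))>0$, not $\min(\chi(1),\chi(-1))>0$ --- and the weaker statement is not enough for your final Borel--Cantelli step, since on the event $N$ the chain $(M_j)$ could in principle avoid the one state with positive $\chi$ from some time on. The underlying difficulty is structural: the event $\kB$ of \eqref{kB} concerns the \emph{eventual} first visit of $(U,W)$ to the horizontal axis, which may occur many $Q_0$-excursions after the start, so \eqref{C(p)} simply contains no information localized to a single excursion $(\rho_j,\rho_{j+1}]$; nothing in the hypotheses rules out, say, $\chi(1)=0$ with $\chi(-1)>0$. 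A secondary (fixable) flaw: reducing everything to a.s.\ return to $(0,0)$ and then invoking ``i.i.d.\ regeneration'' only gives that the embedded walk visits $0$ infinitely often, whereas the paper's definition of recurrence demands visits to \emph{every} $u\in\Z$ infinitely often; if $\p(Q)$ were supported on a proper subgroup such as $2\Z$, the claimed upgrade would fail. The repair is to run your whole argument with target $(u,0)$ for each fixed $u$, using recurrence of $\p(Q_0)$ at $u$, which your machinery permits.

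The paper's proof avoids the per-excursion localization entirely, and this is the essential difference. Instead of conditioning at the $Q_0$-return times $\rho_j$, it conditions at the successive times $\si(n)$ at which $(U_n,W_n)\in\{(u,0),(u,\pm 1)\}$ (such times occur infinitely often by recurrence of $\p(Q_0)$ and the domination \eqref{3.15}, just as in your first paragraph). Given $\kL_n=\kK_{\si(n)}$, the strong Markov property and translation invariance make the continuation of $(U,W)$ a $Q$-chain started at $(u,\pm1)$, so \eqref{C(p)} applies \emph{directly} and gives conditional probability at least $p$ that the first subsequent zero of $W$ occurs at $U=u$; crucially, that hitting time is itself one of the $\si$-times, so success is detectable along the $\si$-sequence (the paper uses the events $\wt\kB_n=\kB_n\cup\kB_{n+1}$ with $\kB_n=\{W_{\si(n)}=0\}$), and Levy's conditional Borel--Cantelli applies with the uniform bound $p$ from the hypothesis --- no new constant like your $q$ ever needs to be manufactured. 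In short: your opening and closing moves (coupling, domination at axis visits, conditional Borel--Cantelli) match the paper, but the middle step replacing $p$ by a per-$Q_0$-excursion hit probability is both unproven and unnecessary; conditioning at the $|W|\le 1$ opportunity times rather than at $Q_0$-excursion boundaries is what makes the argument close.
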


\begin{proof} Assume that $Q$ is successfully coupled with $Q_0$ by some $\wh{Q}$. Let $(U,V,W)$
be a Markov chain on $\Z^3$ with transition matrix $\wh{Q}$ starting at $(0,0,0)$ and denote by $\pp$ the law of this Markov chain.
Let $u$ be arbitrary in $\Z$. Since $\p(Q_0)$ is recurrent, $(U_n,V_n)=(u,0)$ infinitely often $\pp$-a.s. But
since $Q$ is successfully coupled with $Q_0$, it must hold at every time $n$ at
which $(U_n, V_n) = (u,0)$, that $|W_n| \le 1$. This implies that
the event $\kE_n :=\big \{U_n = u,W_n \in \{-1,0,+1\}\big \}$ occurs
infinitely often $\pp$-a.s.
Let $\si(1)< \si(2)\dots$ be the sequence of the successive times at which $\kE_t$
occurs and define the $\si$-fields
$$
\kK_t = \si((U_n,W_n); n \le t),\; \kL_t = \kK_{\si(t)}.
$$
Further, define the events
$$
\kB_n = \{W_{\si(n)} = 0\} = \{(U_{\si(n)},W_{\si(n)})= (u,0)\},
$$
and
\beq
\begin{split}
&\wt{\kB}_n  = \kB_n \cup \kB_{n+1} \\
&= \kB_n \cup \{W_{\si(n)} = +1,
W_{\si(n+1)} = 0\} \cup \{W_{\si(n)} = -1,W_{\si(n+1)} = 0\}.
\end{split}
\eeq
We shall complete the proof by showing that
\beq
\wt{\kB}_n \text{ occurs infinitely often $\pp$-a.s.}
\label{sufficient}
\eeq
Clearly this suffices for recurrence, since
$$
\cup_{n\ge 1} \wt{\kB}_n = \cup_{n \ge 1 } \kB_n.
$$
Now $\wt {\kB}_n \in \kL_{n+1}$. Moreover it holds
\begin{eqnarray*}
\label{BCgeneral}
\pp\{\wt{\kB}_n\mid
\kL_n\} &= & 1_{\{W_{\si(n)=0}\}}+\pp\{W_{\si(n+1)}=0 \mid \kL_n\} 1_{\{W_{\si(n)=1}\}}\\
 & &  + ~ \pp\{W_{\si(n+1)}=0 \mid \kL_n\} 1_{\{W_{\si(n)=-1}\}}\\
& \ge & \nonumber 1_{\{W_{\si(n)=0}\}}+ Q_+^*\{\kB\}  1_{\{W_{\si(n)=1}\}} + Q_-^*\{\kB\}  1_{\{W_{\si(n)=-1}\}} \\
 & & \hspace{3cm} \text{(by Markov property and translation invariance)} \\
& \ge & \nonumber p \quad \text{(by \eqref{C(p)}).}
\end{eqnarray*}
Consequently,
$$
\sum_{n \ge 1} \pp\{\wt {\kB}_n \mid\kL_n\} \ge \sum_{n \ge
1}p = \infty.
$$
The conditional Borel-Cantelli lemma (Theorem 12.15 in \cite{W}) now implies that
\eqref{sufficient} holds.
\end{proof}

This lemma proves recurrence of (the trace on the horizontal axis of) a Markov chain which uses only one
transition matrix $Q$. Benjamini's process is built up by concatenating excursions
from Markov chains with more than one transition matrix.
We shall use arguments very similar to the preceding lemma, but
involving different transition matrices, in Theorem \ref{pA}.

\subsubsection{Coupling of a modified walk with a Markov process in $\Z^2$}
Throughout this subsection we let $(Q_i, 0 \le i\le k)$, with $k\le \infty$,
be a sequence of transition
matrices on $\Z^2$,  translation invariant in the sense of \eqref{3.14},
and such that for all $i\in [1,k]$, $Q_i$ is successfully coupled with $Q_0$ 
by some $\wh{Q}_i$. We assume that 
for some $p>0$ independent of $i$,  \eqref{C(p)} holds
for all $Q_i$, $i\le k$. 
Note that by Lemma \ref{irred}, if $Q_0$ satisfies \eqref{Tfini} and if $\p(Q_0)$ is recurrent, then the $Q_i$ for $1\le i \le k$ automatically satisfy \eqref{Tfini} as well. Set $P_i=\p(Q_i)$ for all $i\le k$, and let $F$ and $G$ be the functions as defined
in the introduction. We denote by $(S_n,i(n))$ the process defined by \eqref{conditioni}.

Let us now define the coupling between the (generalized version of) the
Benjamini process $\{S_n\}$ and the Markov process with transition matrix $Q_0$
on $\Z^2$. In order to carry this out, we
note that for all $i$, there exists $G_i:\Z^3\times
[0,1]\to \Z^3$, such that if $R$ is a uniformly
distributed random variable on $[0,1]$, then
$$
\wh{Q}_i((u,v,w),(u',v',w'))=\pp[G_i((u,v,w),R)=(u',v',w')].
$$
Here and in the sequel we write $\pp$ for the measure governing the
choice of one or several uniform random variables on $[0,1]$. It
will be clear from the context to which random variables this
applies. Let $\N_k:=\{1,...,k\}$ and define
$\wh{F}:\Z \times \N_k \times \Z \times \cup_{n\ge 0}(\Z\times \N_k)^n \times [0,1]\to
\N_k$, by
\beq
\label{3.18} \wh{F}((w,i),u_0,(u_\l,i_\l)_{\l \le n},a)=i\quad \text{ if } |w|\geq 1.
\eeq
and by
\beq \label{3.18b}
\wh{F}((0,i),u_0,(u_\l,i_\l)_{\l\le n},a) = F(a,u_0,(u_\l,i_\l)_{\l\le n}). \eeq

This function $\wh{F}$ determines
the index $i$ in $Q_i$ which will govern the steps in our modified
random walk over a certain random time interval, as we make more
precise now. Let $(A_\l)_{\l\geq 1}$ and $(B_\l)_{\l\geq 1}$ be two
independent sequences of i.i.d., uniformly distributed random
variables on $[0,1]$. Let $U_0$ be a random variable distributed like $S_0$, independent of
$(A_\l)_{\l\geq 1}$ and $(B_\l)_{\l\geq 1}$. 
Let
$$
\mathcal{F}_n=\sigma((A_\l,B_\l);\l\leq n)\vee
\sigma(U_0).
$$
Define $\wh{U}_\l=(U_\l,V_\l,W_\l)$ and
$I_\l$ for $\l\geq 1$ by the following: set $\tau_0=0$, $V_0=W_0=0$ and
for $n \ge 1$
\beq
\tau_{n+1}=\inf\{\l >\tau_n:W_\l=0\}.
\label{taun}
\eeq
In Lemma \ref{lem3.2} we shall show that if $\p(Q_0)$ is recurrent, and the $Q_i$ satisfy
\eqref{C(p)}, then these stopping times are $\pp$-a.s. finite. For $m\geq 0$, set (one can take $I_0=1$, or any other value, since $W_0=0$, $I_1$ will not depend on $I_0$)
\beq
\label{eq:IM}
I_{m+1}=\wh{F}\left((W_m,I_m),U_0,(U_{\tau_\l},I_{\tau_\l})_{\{\tau_{\l}\le m\}},
A_{m+1}\right),
\eeq
with $\wh{F}$ as defined in \eqref{3.18}, and
\beq
\label{eq:UM}
\wh{U}_{m+1}=G_{I_{m+1}}(\wh{U}_m,B_{m+1}).
\eeq
Note that
$(\wh{U}_m,(\tau_l)_{\{\tau_l\le m\}},I_m)$ is
$\mathcal{F}_m$-measurable. Note also that \eqref{3.18} implies that $I_{m+1}=I_m$ when $|W_m|\ge 1$. This ensures that
for all $m\in[\tau_\l+1,\tau_{\l+1}]$, $I_m=I_{\tau_\l+1}$.
\begin{lem}
\label{lem31}
The process $(U_n,V_n)_{n\geq 0}$ is a Markov
chain with transition matrix $Q_0$.
\end{lem}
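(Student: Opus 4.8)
The plan is to reduce everything to a single conditional-expectation identity. Since each $\wh{U}_\ell=(U_\ell,V_\ell,W_\ell)$, and hence each pair $(U_\ell,V_\ell)$, is $\mathcal{F}_m$-measurable for $\ell\le m$, it suffices to show that for every bounded $h:\Z^2\to\R$ and every $m\ge 0$,
\[
\E[h(U_{m+1},V_{m+1})\mid \mathcal{F}_m]=\sum_{(u',v')}Q_0\big((U_m,V_m),(u',v')\big)\,h(u',v'),
\]
with the right-hand side being $\sigma(U_m,V_m)$-measurable. This one identity simultaneously delivers the Markov property (the conditional law given the whole past reduces to a function of the current state) and identifies the transition matrix as $Q_0$.

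First I would record the independence bookkeeping. By construction $(A_{m+1},B_{m+1})$ is independent of $\mathcal{F}_m$, and $A_{m+1}$ is independent of $B_{m+1}$. All the data entering \eqref{eq:IM}, namely $\wh{U}_m$, $I_m$, $W_m$ and the past stopping times, are $\mathcal{F}_m$-measurable, so $I_{m+1}=\wh{F}(\ldots,A_{m+1})$ is a measurable function of $A_{m+1}$ together with $\mathcal{F}_m$-measurable quantities alone. Conditioning on $\mathcal{F}_m$ and on a realized value $I_{m+1}=i$, the only remaining randomness in $\wh{U}_{m+1}=G_i(\wh{U}_m,B_{m+1})$ comes from $B_{m+1}$, which is uniform and independent of everything already fixed; by the defining property of $G_i$ the conditional law of $\wh{U}_{m+1}$ is then $\wh{Q}_i(\wh{U}_m,\cdot)$.

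The decisive step is to project onto the first two coordinates and invoke \eqref{3.16}, which gives, for \emph{every} index $i$ and every state,
\[
\sum_{w'}\wh{Q}_i\big((u,v,w),(u',v',w')\big)=Q_0\big((u,v),(u',v')\big).
\]
Hence the conditional law of $(U_{m+1},V_{m+1})$ given $\{I_{m+1}=i\}$ and $\mathcal{F}_m$ equals $Q_0((U_m,V_m),\cdot)$, and crucially this is the same for every $i$. Taking the average over the random value of $I_{m+1}$ (that is, integrating out $A_{m+1}$) therefore leaves the answer unchanged, which yields the displayed identity. The interchange of the two averages over $A_{m+1}$ and $B_{m+1}$ is justified by Fubini and the independence noted above.

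The one point that genuinely needs care, and which is really the heart of the argument, is precisely that $I_{m+1}$ is not deterministic: it is selected using the fresh variable $A_{m+1}$ and is even allowed to depend on the third coordinate $W_m$. One must rule out that this index choice biases the $(U,V)$-transition. This is ensured by two facts acting together: $B_{m+1}$ is independent of $A_{m+1}$ and of $\mathcal{F}_m$, so the index is chosen independently of the uniform variable that drives the step $G$; and, more importantly, \eqref{3.16} forces the $(U,V)$-marginal of $\wh{Q}_i$ to equal $Q_0$ uniformly in $i$, so the marginal transition is insensitive to however the index is picked. Once this insensitivity is in hand, the rest of the verification is routine measurability bookkeeping.
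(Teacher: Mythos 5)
Your proof is correct and takes essentially the same approach as the paper's: condition on $\mathcal{F}_n$, use the independence of $A_{n+1}$ and $B_{n+1}$ from each other and from $\mathcal{F}_n$ to factor the conditional law of $\wh{U}_{n+1}$ given the index $i$ into $\wh{Q}_i(\wh{U}_n,\cdot)$, then invoke \eqref{3.16} to see the $(U,V)$-marginal is $Q_0$ uniformly in $i$, and sum out the index. The only cosmetic difference is that you phrase the computation via bounded test functions and conditional laws, whereas the paper computes the pointwise probabilities $\pp\{(U_{n+1},V_{n+1})=(u',v')\mid\mathcal{F}_n\}$ directly.
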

\begin{proof} For $n\geq 0$ and $(u',v')$ in $\Z^2$,
\begin{eqnarray*}
\pp\{(U_{n+1},V_{n+1})=(u',v')\mid \mathcal{F}_n\} &=&
\sum_{w',i}\pp\{\wh{U}_{n+1}=(u',v',w') \hbox{ and } I_{n+1}=i\mid
\mathcal{F}_n\}\\
&=& \sum_{w',i}\pp\{G_i(\wh U_n,B_{n+1})=(u',v',w')
\qquad \text{and } \\ &&\quad \wh{F}((W_n,I_n),U_0,(U_{\tau_\l}, I_{\tau_\l})_{\{\tau_\l \le n\}},A_{n+1})=i \mid
\mathcal F_n\}\\
&=& \sum_{w',i} \wh {Q}_i(\wh {U}_{n},(u',v',w'))
\pp\{I_{n+1}=i\mid \mathcal{F}_n\}\\
&=& Q_0((U_{n},V_{n}),(u',v')),
\end{eqnarray*}
from which we deduce the Markov property.
\end{proof}

\begin{lem} \label{lem3.2}
Let $(U_m,I_m)$ be the process defined by \eqref{eq:IM} and \eqref{eq:UM}.
Assume that all $Q_i$, $1 \le i \le k$, satisfy \eqref{C(p)} and are
successfully coupled with $Q_0$. Finally, assume that \eqref{Tfini} holds for $Q_0$ and that $\p(Q_0)$ is
recurrent. Then $\pp$-a.s. it holds $\tau_n < \infty$ for all $n \ge 0$. In particular all $Q_i$, $1\le i\le k$, satisfy \eqref{Tfini}, and we set $P_i=\p(Q_i)$.
Moreover
\newline
{\bf (a)} For all $n\geq 0$, the law of $U_{\tau_{n+1}}-U_{\tau_n}$ given
$\kG_n :=\kF_{\tau_n}\vee \sigma(I_{\tau_n+1})$ is
$P_{I_{\tau_n+1}}$.
\newline {\bf (b)} For all $n\ge 0$,
\beq
I_{\tau_{n+1}}=I_{\tau_n+1}=F(A_{\tau_n+1},U_0,(U_{\tau_\l},
I_{\tau_{\l}})_{\{\tau_\l\le n\}}).
\label{index}
\eeq
\newline
{\bf (c)} For $(i,u)\in\{1,...,k\}\times\Z$, write $\pp_{i,u}$ for the  law of
the Markov chain on $\Z^2$ with transition
matrix $Q_i$, starting from $(u,0)$ and
stopped at the first time the $w$-coordinate returns to $0$.
For all $n\ge 0$, given $\kG_n$, the law of the excursion from the $U$-axis
$$(U_{\tau_n+\l},W_{\tau_{n+\l}})_{0\leq \l\leq \tau_{n+1}-\tau_n}$$
is $\pp_{I_{\tau_n},U_{\tau_n}}$.
\end{lem}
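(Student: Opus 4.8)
The plan is to establish the assertions in the order: finiteness of the $\tau_n$, then (b), then (c), with (a) following from (c). The engine behind all of them is the elementary observation that, by \eqref{3.18}, the index $I_m$ retains its current value as long as $|W_m|\ge 1$; hence on each interval $[\tau_n+1,\tau_{n+1}]$ the index is frozen at $I_{\tau_n+1}$, and the triple $\wh U_m=(U_m,V_m,W_m)$ evolves according to the single transition matrix $\wh Q_{I_{\tau_n+1}}$, driven only by the fresh uniforms $B_{\tau_n+1},B_{\tau_n+2},\dots$ (the variables $A_{\tau_n+2},\dots$ being ignored by $\wh F$ inside the excursion).

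For finiteness I would first apply Lemma \ref{irred} to each pair $(Q_i,Q_0)$: this is legitimate because $Q_i$ is successfully coupled with $Q_0$, \eqref{Tfini} holds for $Q_0$, $\p(Q_0)$ is recurrent, and $C(p)$ holds for $Q_i$; it yields that \eqref{Tfini} holds for every $Q_i$, $1\le i\le k$, which is exactly what licenses setting $P_i=\p(Q_i)$. Finiteness of the $\tau_n$ then follows by induction on $n$: on $\{\tau_n<\infty\}$ one has $W_{\tau_n}=0$, and by the freezing observation the process $(U_{\tau_n+\l},W_{\tau_n+\l})_{\l\ge 0}$, up to the first return of $W$ to $0$, is a $Q_{I_{\tau_n+1}}$-walk started at $(U_{\tau_n},0)$ (its $(U,W)$-marginal following $Q_{I_{\tau_n+1}}$ by \eqref{3.17}). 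Since \eqref{Tfini} holds for $Q_{I_{\tau_n+1}}$ and this matrix is translation invariant in the sense of \eqref{3.14}, that return time is a.s. finite, i.e. $\tau_{n+1}<\infty$ a.s.

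Part (b) is immediate from the freezing observation: for $m\in[\tau_n+1,\tau_{n+1}-1]$ we have $W_m\neq 0$, so \eqref{3.18} forces $I_{m+1}=I_m$, whence $I$ is constant on $[\tau_n+1,\tau_{n+1}]$ and $I_{\tau_{n+1}}=I_{\tau_n+1}$; and since $W_{\tau_n}=0$, \eqref{3.18b} gives $I_{\tau_n+1}=F\big(A_{\tau_n+1},(U_{\tau_\l},I_{\tau_\l})_{\{\tau_\l\le n\}}\big)$. For part (c) I would apply the strong Markov property at the $(\kF_m)$-stopping time $\tau_n$. By the strong Markov property for the i.i.d. sequence $((A_\l,B_\l))_\l$, the family $(A_{\tau_n+j},B_{\tau_n+j})_{j\ge 1}$ is i.i.d. and independent of $\kF_{\tau_n}$; since $\si(I_{\tau_n+1})\subset\kF_{\tau_n}\vee\si(A_{\tau_n+1})$ and the sequences $(A_\l)$ and $(B_\l)$ are independent, the variables $B_{\tau_n+1},B_{\tau_n+2},\dots$ remain i.i.d. uniform and independent of $\kG_n=\kF_{\tau_n}\vee\si(I_{\tau_n+1})$. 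Writing $i=I_{\tau_n+1}$ and $u=U_{\tau_n}$, the freezing observation gives $\wh U_{\tau_n+\l+1}=G_i(\wh U_{\tau_n+\l},B_{\tau_n+\l+1})$ until $W$ first returns to $0$, so each step is a $\wh Q_i$-transition and, by \eqref{3.17}, its $(U,W)$-marginal is a $Q_i$-walk from $(u,0)$. As $\tau_{n+1}-\tau_n$ is precisely the first return time of $W$ to $0$ for this walk, the conditional law of $(U_{\tau_n+\l},W_{\tau_n+\l})_{0\le\l\le\tau_{n+1}-\tau_n}$ given $\kG_n$ is $\pp_{i,u}=\pp_{I_{\tau_n+1},U_{\tau_n}}$, which is (c). Finally (a) follows: under $\pp_{i,u}$, $U_{\tau_{n+1}}-U_{\tau_n}$ is the horizontal displacement of this $Q_i$-excursion from $(u,0)$, whose law, by translation invariance \eqref{3.14}, equals that of $U_T$ for a $Q_i$-chain from $(0,0)$, namely $\p(Q_i)=P_{I_{\tau_n+1}}$.

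The step I expect to be the main obstacle is the conditioning in part (c): one must check carefully that conditioning on $\kG_n$ keeps the future $B$-variables i.i.d. uniform and independent while pinning the index at $i$, so that the whole excursion is governed by the single matrix $\wh Q_i$ and the abstract stopping time $\tau_{n+1}$ really coincides with the first return to the horizontal axis of the associated $Q_i$-walk. Once this identification is in place, (a), (b) and the finiteness of the $\tau_n$ are routine consequences of the freezing relations \eqref{3.18}--\eqref{3.18b}, the marginal property \eqref{3.17}, translation invariance \eqref{3.14}, and Lemma \ref{irred}.
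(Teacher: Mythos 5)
Your proof is correct and follows essentially the same route as the paper: induction on $n$ with the index frozen on each excursion by \eqref{3.18}, Lemma \ref{irred} supplying \eqref{Tfini} for the $Q_i$ and hence a.s.\ finiteness of the $\tau_n$, part (b) from \eqref{3.18}--\eqref{3.18b}, part (c) by the same conditional computation as in Lemma \ref{lem31} (which the paper explicitly skips and you carry out in detail via the strong Markov property for the driving uniforms, including the needed independence of $(B_{\tau_n+j})_{j\ge 1}$ from $\kG_n$), and part (a) contained in (c). You also correctly identify the conditional excursion law as $\pp_{I_{\tau_n+1},U_{\tau_n}}$, which matches the paper's intent: the subscript $I_{\tau_n}$ in the statement of (c) is a slip, since by (b) the index governing the excursion on $[\tau_n,\tau_{n+1}]$ is $I_{\tau_n+1}=I_{\tau_{n+1}}$.
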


\begin{proof} The proof is by induction on $n$. First take $n=0$.
Then $\tau_1 < \infty$ a.s. by virtue of Lemma \ref{irred}.
Now part (a) for $n = 0$ is contained in part (c) for $n=0$.
Part (b) for $n=0$ follows from \eqref{3.18}, \eqref{3.18b} and \eqref{eq:IM}.
In particular, it follows from \eqref{3.18} and from the definition of the $\tau$'s
that $I_m$ can only change when $W_m = 0$, so that $I_m$
is constant on the intervals $[\tau_n+1, \tau_{n+1}]$ for $n=0$. Equation \eqref{index}
follows from \eqref{3.18} and \eqref{eq:IM}. The proof of part (c) for $n=0$
is very similar to the
one of Lemma \ref{lem31}. We skip the details.

Now assume that $\tau_N < \infty$ and parts (a)-(c) have been
proven for $n \le N$. Then given $\kG_n$, on the event $\{I_{\tau_N+1}=i\}$, $\tau_{N+1}-\tau_N$ is equal in law to $\tau_1$ for the Markov chain with transition matrix $Q_{i}$ started at $(U_{\tau_N},0)$. Lemma \ref{irred} implies that this $\tau_1$ is finite a.s.
Thus $\tau_{N+1} < \infty$ $\pp$-a.s. Now statements (a)-(c) for $n = N+1$ can be
proven as in the case $n=0$. Again we skip the details.
\end{proof}

The following lemma is almost immediate from Lemma \ref{lem3.2} and
the strong Markov property.
The lemma shows that a sample
path of Benjamini's process can be built up from a sequence of
excursions, by identifying the initial point of each excursion
with the endpoint of the preceding excursion.
This leads to our principal recurrence result, Theorem \ref{pA},
which deduces recurrence of a Benjamini process from simple and known recurrence
properties of some of the excursions.

\begin{lem} \label{lem=enloi}
The processes $(S_0,(S_n,i(n))_{n\ge 1})$ defined by \eqref{conditioni} and
$(U_0,(U_{\tau_n},I_{\tau_n})_{n\ge 1})$ have the same distribution.
\end{lem}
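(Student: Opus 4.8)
The plan is to verify that the embedded process $(U_{\tau_n},I_{\tau_n})_{n\ge 0}$ obeys exactly the recursive mechanism \eqref{conditioni} that defines $(S_n,i(n))_{n\ge 0}$, and then to invoke the uniqueness-in-law statement from the introduction: a process satisfying \eqref{loiconditionnelle} together with a prescribed index-selection rule $F$, the sequence $\{P_i\}$, and a prescribed initial law has its law completely determined by these data. Since $\tau_0=0$ gives $(U_{\tau_0},I_{\tau_0})=(U_0,I_0)$, and $(U_0,I_0)$ and $(S_0,i(0))$ share the same law by hypothesis, it suffices to match the two conditional mechanisms step by step and conclude by induction on the finite-dimensional distributions.

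First I would match the index-selection rule. Lemma \ref{lem3.2}(b) already gives
$$
I_{\tau_{n+1}} = I_{\tau_n+1} = F\big(A_{\tau_n+1},(U_{\tau_\l},I_{\tau_\l})_{\{\tau_\l\le n\}}\big),
$$
which is formally identical to the rule $i(n+1)=F(A_{n+1},(S_\l,i(\l))_{\l\le n})$ in \eqref{conditioni}. To turn this formal identity into an identity of conditional laws, I must check that $A_{\tau_n+1}$ plays the role of a fresh uniform: conditionally on $\kF_{\tau_n}$ (equivalently, on the past $\sigma((U_{\tau_\l},I_{\tau_\l})_{\l\le n})$, which is $\kF_{\tau_n}$-measurable), the variable $A_{\tau_n+1}$ is uniform on $[0,1]$ and independent of that past. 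This is the strong Markov property applied at the stopping time $\tau_n$ to the i.i.d.\ driving sequence $(A_\l,B_\l)_{\l\ge 1}$: the post-$\tau_n$ terms of this sequence are independent of $\kF_{\tau_n}$ with unchanged law, so $A_{\tau_n+1}$ is uniform and independent of $\kF_{\tau_n}$. Consequently the conditional law of $I_{\tau_{n+1}}$ given the past equals the push-forward of the uniform law under $F(\cdot,(U_{\tau_\l},I_{\tau_\l})_{\l\le n})$, which matches the conditional law of $i(n+1)$ given $(S_\l,i(\l))_{\l\le n}$.

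Next I would match the increment mechanism. Lemma \ref{lem3.2}(a) states that, given $\kG_n=\kF_{\tau_n}\vee\sigma(I_{\tau_n+1})$, the law of $U_{\tau_{n+1}}-U_{\tau_n}$ is $P_{I_{\tau_n+1}}=P_{I_{\tau_{n+1}}}$. Since $P_i=\p(Q_i)$ are precisely the step distributions attached to the index in \eqref{conditioni}, this reproduces the defining property \eqref{loiconditionnelle}: conditionally on the past and on the freshly chosen index $I_{\tau_{n+1}}$, the increment of $U_{\tau_\bullet}$ has law $P_{I_{\tau_{n+1}}}$, exactly as $S_{n+1}-S_n$ has law $P_{i(n+1)}$. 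With the index rule and the increment law both matched, an induction on $n$ shows that $(U_{\tau_0},I_{\tau_0},\dots,U_{\tau_n},I_{\tau_n})$ and $(S_0,i(0),\dots,S_n,i(n))$ have the same joint distribution for every $n$, and hence the two processes agree in law.

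The one point requiring genuine care—the main obstacle—is the independence-and-uniformity of $A_{\tau_n+1}$ given $\kF_{\tau_n}$; everything else is a mechanical transcription of parts (a) and (b) of Lemma \ref{lem3.2}. This is exactly why the statement is ``almost immediate from Lemma \ref{lem3.2} and the strong Markov property'': the strong Markov property is invoked at the $(\kF_m)$-stopping times $\tau_n$, which are a.s.\ finite by Lemma \ref{lem3.2}, to supply the requisite conditional independence. I would make this rigorous by noting that $A_{\tau_n+1}$ is measurable with respect to the post-$\tau_n$ $\sigma$-field of $(A_\l,B_\l)_{\l\ge 1}$ and applying the standard strong Markov decomposition of that i.i.d.\ sequence.
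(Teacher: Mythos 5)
Your proof is correct and follows exactly the route the paper indicates: the paper gives no written proof, stating only that the lemma is ``almost immediate from Lemma \ref{lem3.2} and the strong Markov property,'' and your argument is a faithful elaboration of precisely that, using Lemma \ref{lem3.2}(a) and (b) for the increment and index mechanisms and the strong Markov property of the i.i.d.\ driving sequence at the a.s.\ finite stopping times $\tau_n$ to certify that $A_{\tau_n+1}$ is a fresh uniform independent of $\kF_{\tau_n}$. Your identification of that independence step as the only point needing genuine care, followed by induction on finite-dimensional distributions, is exactly right.
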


\subsubsection{Recurrence properties and examples}

\begin{theo}
\label{pA}
Let $(Q_i, 0 \le i\le k)$ be a sequence of transition matrices on $\Z^2$
which are translation invariant in the sense of \eqref{3.14}. Assume that
for all $1 \le i \le k, Q_i$ is successfully coupled with $Q_0$. Assume
further that $Q_0$ satisfies \eqref{Tfini}, $P_0=\p(Q_0)$ is recurrent and that all $Q_i$,
$1\le i \le k$, satisfy \eqref{C(p)} for
some $p>0$, independent of $i$. Then for any process $(S_0,(S_n,i(n))_{n\ge 1})$ that satisfies
\eqref{loiconditionnelle} with $P_i = \p(Q_i)$, $\{S_n\}_{n \ge 0}$ is recurrent.
\end{theo}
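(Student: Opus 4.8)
The plan is to transfer the problem to the coupled Markov chain $(U_n,V_n,W_n)$ constructed just before the statement and to re-run the conditional Borel--Cantelli argument of Lemma \ref{irred}, the only genuinely new feature being that successive excursions are now governed by \emph{different} matrices $Q_i$. First I would invoke Lemma \ref{lem=enloi}: since $\{S_n,i(n)\}_{n\ge 0}$ and $(U_{\tau_n},I_{\tau_n})_{n\ge 0}$ have the same law, it suffices to prove that $(U_{\tau_n})_{n\ge 0}$ visits every $u\in\Z$ infinitely often $\pp$-a.s. The hypotheses of Lemma \ref{lem3.2} are exactly those assumed here, so all the $\tau_n$ are a.s. finite, $(U_{\tau_n})$ is well defined, and the excursion description (a)--(c) is available; in particular the index $I_m$ is constant on each interval $(\tau_n,\tau_{n+1}]$ and the $(U,W)$-part of the $n$-th excursion follows the corresponding $Q_i$.

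Next I would fix $u\in\Z$ and manufacture infinitely many candidate times. By Lemma \ref{lem31} the pair $(U_n,V_n)$ is a $Q_0$-Markov chain; writing $\si^V(1)<\si^V(2)<\cdots$ for the successive returns of $V$ to $0$ (a.s. finite by \eqref{Tfini} for $Q_0$), translation invariance \eqref{3.14} together with the strong Markov property shows that $(U_{\si^V(n)})_{n\ge 0}$ is an ordinary random walk with i.i.d. increments of law $P_0=\p(Q_0)$. Since $P_0$ is recurrent, $(U_n,V_n)=(u,0)$ infinitely often. Because $Q_i$ is successfully coupled with $Q_0$ we have $|W_n|\le|V_n|+1$ for all $n$, so at every such time $W_n\in\{-1,0,+1\}$; hence the event $\kE_n:=\{U_n=u,\ W_n\in\{-1,0,+1\}\}$ occurs infinitely often $\pp$-a.s.

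The heart of the argument is then a conditional Borel--Cantelli step modelled on Lemma \ref{irred}. Let $\si(1)<\si(2)<\cdots$ enumerate the times at which $\kE$ occurs, set $\kK_t=\si((U_n,V_n,W_n,I_n);\,n\le t)$ and $\kL_n=\kK_{\si(n)}$, and put
$$
\kB_n=\{W_{\si(n)}=0\},\qquad \wt\kB_n=\kB_n\cup\{W_{\si(n)}=\pm1,\ W_{\si(n+1)}=0\}.
$$
On $\{W_{\si(n)}=\pm1\}$ the chain is in the middle of an excursion whose index $i=I_{\si(n)}$ is $\kL_n$-measurable and constant until $W$ next hits $0$; by the strong Markov property and \eqref{3.17} the $(U,W)$-evolution from $\si(n)$ up to that return is distributed as the chain with matrix $Q_i$ started from $(u,\pm1)$, and that return is the next $\tau$-time. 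Property \eqref{C(p)} for $Q_i$, which holds with the \emph{same} $p$ for every $i$, therefore gives $\pp\{\wt\kB_n\mid\kL_n\}\ge p$ no matter which index is active. Since $\sum_n\pp\{\wt\kB_n\mid\kL_n\}=\infty$, the conditional Borel--Cantelli lemma (Theorem 12.15 in \cite{W}) yields that $\wt\kB_n$, and hence (via $\wt\kB_n\subset\kB_n\cup\kB_{n+1}$) $\kB_n$, occurs infinitely often. But $\kB_n$ i.o. means $(U,W)=(u,0)$ i.o., i.e. $U_{\tau_m}=u$ for infinitely many $m$; as $u$ was arbitrary this proves recurrence of $(U_{\tau_n})$ and, through Lemma \ref{lem=enloi}, of $\{S_n\}$.

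I expect the main obstacle to be exactly the point where different excursions use different $Q_i$: one cannot simply quote Lemma \ref{irred}, and the work is to verify that the uniform lower bound $p$ of \eqref{C(p)} survives the conditioning. Concretely, one must check that the active index at each candidate time is measurable with respect to $\kL_n$ (using that $I_m$ is $\kF_m$-measurable and changes only when $W=0$), and that the strong Markov property applies to dynamics that are not Markov in $(U,V,W,I)$ globally but are Markov, with matrix $Q_i$, on each excursion. The remaining bookkeeping---identifying $\wt\kB_n$ i.o. with $\kB_n$ i.o.---is routine and identical to Lemma \ref{irred}.
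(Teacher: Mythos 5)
Your proposal is correct and follows essentially the same route as the paper: reduce via Lemma \ref{lem=enloi} to the coupled chain, use recurrence of $P_0$ together with the bound $|W_n|\le |V_n|+1$ to produce infinitely many times in $\kE_n$, and then run the conditional Borel--Cantelli argument of Lemma \ref{irred} with the uniform bound $p$ from \eqref{C(p)} applied to the $\kL_n$-measurable active index on each excursion. The only cosmetic difference is that you include $V_n$ in the conditioning $\si$-field where the paper uses $\si((U_n,W_n,I_n);n\le t)$, which is harmless since by \eqref{3.17} the $(U,W)$-marginal of the $\wh{Q}_i$-excursion is the $Q_i$-chain regardless of the $V$-coordinate.
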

\begin{proof} Let $(S_0,(S_n,i(n))_{n\ge 1})$ be a process
satisfying \eqref{loiconditionnelle}.
Without loss of generality we can assume that $S_0=0$. Such process can be defined by \eqref{conditioni} for some functions $F$ and $G$.
Let $(\wh{U}_m,I_m)$ be the process defined by \eqref{eq:IM} and \eqref{eq:UM} with
$\wh{F}$ defined by \eqref{3.18} and \eqref{3.18b}, and with $\wh{U}_0=(0,0,0)$.
Let $\pp$ be the measure governing the choice of the independent uniformly
distributed random variables used to define the process $(\wh{U}_m,I_m)$.
We still denote by $\tau_n$, $n\ge 0$, the successive return times to $0$ of
$W$, as defined in \eqref{taun}.
Lemma \ref{lem=enloi} states that $(S_n,i(n))_{n\ge 0}$ is equal in law to $(U_{\tau_n},I_{\tau_n})_{n\ge 0}$.

We now prove that $\{U_{\tau_n}\}$ is recurrent on $\Z$. To this end
observe that $(U,V)$ is a Markov chain with transition matrix given
by $Q_0$ and that $P_0 = \p(Q_0)$ is  recurrent. This implies that
for any fixed $u$, $(U_\l,V_\l)=(u,0)$ for infinitely many $\l$
with $\pp$-probability 1. Moreover, by construction, $|W|\le |V|+1$. So
\begin{eqnarray*}
(U_\l,W_\l)\in \{(u,0),(u,-1),(u,1)\} \text{ infinitely often},
\end{eqnarray*}
still with $\pp$-probability 1.
Denote by $\sigma_n$, $n\ge 0$, the successive return times to $\{(u,0),(u,-1),(u,1)\}$
of $(U,W)$.

>From here on we can follow the proof of Lemma \ref{irred} (which is the case $k=1$). We redefine
$$
\kK_t := \si((U_n,W_n,I_n); n \le t),\; \kL_t = \kK_{\si(t)},
$$
and we replace the condition \eqref{C(p)} by (with the event $\kB$ as in \eqref{kB})
\begin{eqnarray}
\label{general}
\begin{array}{c}
 Q_{i,\pm 1}\{\kB\} \ge p\;,
\end{array}
\end{eqnarray}
where $Q_{i,1}$ (resp. $Q_{i,-1}$) denotes the law of the Markov chain with transition matrix $Q_i$
when it starts at $(0,1)$ (resp.
at $(0,-1)$). We further  redefine the events
$$
\kB_n = \{W_{\si(n)} = 0\} = \{(U_{\si(n)},W_{\si(n)})= (u,0)\},
$$
and $\wt{\kB}_n  = \kB_n \cup \kB_{n+1}$. The proof will be complete if we show that
$$
\wt{\kB}_n \text{ occurs infinitely often $\pp$-a.s.}
$$
Now $\wt {\kB}_n \in \kL_{n+1}$ and on $\{I_{\si(n)}=i\}$, it holds
\begin{eqnarray*}
\pp\{\wt{\kB}_n\mid
\kL_n\} &= & 1_{\{W_{\si(n)=0}\}}+\pp\{W_{\si(n+1)}=0 \mid \kL_n\} 1_{\{W_{\si(n)=1}\}}\\
 & &  + \pp\{W_{\si(n+1)}=0 \mid \kL_n\} 1_{\{W_{\si(n)=-1}\}}\\
& \ge & \nonumber 1_{\{W_{\si(n)=0}\}}+ Q_{i,1}\{\kB\}  1_{\{W_{\si(n)=1}\}} + Q_{i,-1}\{\kB\}  1_{\{W_{\si(n)=-1}\}},
\end{eqnarray*}
by using that given $\kL_n$ and on $\{I_{\si(n)}=i\}$, the law of $(U_{ \si(n)+k},W_{ \si(n)+k})_k$ stopped at the first positive time $W$ reaches $0$, is the same as the law of the Markov chain with transition matrix $Q_i$ starting at $(u,W_{\si(n)})$ and stopped at the first time $W$ reaches $0$, and then by using the  translation invariance of $Q_i$.
Next \eqref{general} implies
$$
\pp\{\wt{\kB}_n\mid \kL_n\}\ge p.
$$
We conclude by using the conditional Borel-Cantelli lemma as in the proof of Lemma \ref{irred}.
\end{proof}

We state now an analogous result which can give examples of transient processes.
We say that a process is {\it transient} if almost surely it comes back a finite
number of times to each site. A law is said to be transient if the associated
random walk is transient.

\begin{theo}
\label{theotransient}
Let $(Q_i, 0 \le i\le k)$ be a sequence of transition matrices on $\Z^2$ which
are translation invariant in the sense of \eqref{3.14} and satisfy \eqref{Tfini}. Assume that for
all $1\le i\le k$, $Q_0$ is successfully coupled with $Q_i$.
Assume further that $P_0=\p(Q_0)$ is transient.
Then for any process $(S_0,(S_n,i(n))_{n\ge 1})$, which satisfies
\eqref{loiconditionnelle} with $P_i =\p(Q_i)$, $\{S_n\}_{n \ge 0}$ is transient.
\end{theo}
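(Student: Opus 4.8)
The plan is to run the coupled construction of Section 3.1.2, but with the roles of the two auxiliary coordinates interchanged, since now it is $Q_0$ (rather than each $Q_i$) that is ``successfully coupled with'' the others. As in the proof of Theorem \ref{pA} I may assume $S_0=0$ and build a chain $(U_m,V_m,W_m,I_m)$ from $(0,0,0)$, driven by the $\widehat{Q}_i$ witnessing that $Q_0$ is successfully coupled with $Q_i$. Unwinding the definition, \eqref{3.16} now makes the $(U,V)$-marginal evolve by $Q_{I}$ and \eqref{3.17} makes the $(U,W)$-marginal evolve by $Q_0$, while \eqref{3.15} forces $|W_m|\le |V_m|+1$ for all $m$. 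I would let the index $I$ change only when $V_m=0$ (the analogue of \eqref{3.18}--\eqref{3.18b} with $W$ replaced by $V$) and take $\tau_n$ to be the successive returns of $V$ to $0$. Each $\tau_n$ is a.s.\ finite: on $\{I_{\tau_{n-1}+1}=i\}$ the increment $\tau_n-\tau_{n-1}$ is, by the $w$-independence in the $(U,V)$-marginal, distributed as the first return time of the second coordinate of a $Q_i$-chain, which is a.s.\ finite by \eqref{Tfini}; note that here no recurrence and no property $C(p)$ is needed, in contrast with Lemma \ref{lem3.2}. The exact analogues of Lemmas \ref{lem31}--\ref{lem=enloi} (obtained by swapping $V$ and $W$) then show that $(U_{\tau_n},I_{\tau_n})_{n\ge 0}$ has the law of $\{S_n,i(n)\}$, that $(U,W)$ is a Markov chain with transition matrix $Q_0$, and that $U_{\tau_{n+1}}-U_{\tau_n}$ has law $P_{I_{\tau_n+1}}=\varphi(Q_{I_{\tau_n+1}})$. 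It thus suffices to prove that for every fixed $u\in\Z$, a.s.\ $U_{\tau_n}=u$ for only finitely many $n$.

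The key reduction is the following. At each $\tau_n$ we have $V_{\tau_n}=0$, so $|W_{\tau_n}|\le 1$, whence $(U_{\tau_n},W_{\tau_n})\in\{u\}\times\{-1,0,1\}$ whenever $U_{\tau_n}=u$; since the $\tau_n$ are distinct times,
\[
\#\{n:\ U_{\tau_n}=u\}\ \le\ \#\{t\ge 0:\ (U_t,W_t)\in\{u\}\times\{-1,0,1\}\}.
\]
So it is enough to bound the number of visits of the $Q_0$-chain $(U,W)$ to the finite vertical segment $\{u\}\times\{-1,0,1\}$. By the horizontal translation invariance \eqref{3.14} the second coordinate $W$ is an autonomous Markov chain on $\Z$, and by \eqref{Tfini} for $Q_0$ it returns to $0$ a.s., hence is recurrent with invariant measure $\pi$ (finite on each state). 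Let $0=\sigma_0<\sigma_1<\cdots$ be the returns of $W$ to $0$. By the strong Markov property $\widetilde S_m:=U_{\sigma_m}$ is precisely the random walk with step law $P_0=\varphi(Q_0)$, which is transient by hypothesis; write $\widetilde G(0,y)=\sum_{m\ge 0}\pp\{\widetilde S_m=y\}\le \widetilde G(0,0)<\infty$ for its Green's function.

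Finally I would estimate the expected number of visits to $(u,1)$ by decomposing along the $W$-excursions. Writing $h(y):=\E[\,\#\{t\in(0,\sigma_1):(U_t,W_t)=(y,1)\}\,]$ for an excursion started at the origin, the strong Markov property and translation invariance give
\[
\E\big[\#\{t:(U_t,W_t)=(u,1)\}\big]=\sum_{m\ge 0}\E\big[h(u-\widetilde S_m)\big]=\sum_{y}h(u-y)\,\widetilde G(0,y).
\]
Bounding $\widetilde G(0,y)\le \widetilde G(0,0)$ and using that $\sum_y h(y)=\E[\#\{t\in(0,\sigma_1):W_t=1\}]=\pi(1)/\pi(0)<\infty$ (the expected number of visits of the recurrent chain $W$ to state $1$ during one excursion from $0$), the right-hand side is at most $\widetilde G(0,0)\,\pi(1)/\pi(0)<\infty$. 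Hence $(U,W)$ visits $(u,1)$ finitely often a.s.; the identical bound handles $(u,-1)$, while visits to $(u,0)$ equal $\#\{m:\widetilde S_m=u\}$, of finite expectation $\widetilde G(0,u)$. Summing, the segment $\{u\}\times\{-1,0,1\}$ is visited finitely often a.s., and intersecting the corresponding a.s.\ events over $u\in\Z$ yields transience of $\{S_n\}$. I expect the main obstacle to be exactly the levels $W=\pm1$: these are \emph{not} controlled by transience of $\varphi(Q_0)$ by itself, and the crux of the argument is that the expected time $W$ spends at a nonzero level during a single (possibly heavy-tailed, infinite-mean) excursion is nevertheless finite, being governed by the invariant measure $\pi$ rather than by the excursion length.
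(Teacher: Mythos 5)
Your proposal is correct, and it is worth noting that the paper never writes this proof out: it says only that the argument is ``analogous to the proof of Theorem \ref{pA} and left to the reader,'' flagging the asymmetry in the coupling hypothesis. Your construction is precisely the intended analogue: you exchange the roles of $V$ and $W$, so that \eqref{3.16}--\eqref{3.17} make $(U,V)$ evolve by the current $Q_I$ and $(U,W)$ by $Q_0$ (the analogues of Lemmas \ref{lem31}--\ref{lem=enloi} do go through verbatim, and the Lemma \ref{lem31} computation also shows $(U,W)$ is $Q_0$-Markov with respect to the full filtration, giving the strong Markov property at the $\sigma_m$), you embed the process at the returns $\tau_n$ of $V$ to $0$, use \eqref{3.15} to force $(U_{\tau_n},W_{\tau_n})\in\{u\}\times\{-1,0,1\}$ whenever $S_n=u$, and correctly observe that finiteness of the $\tau_n$ now needs only \eqref{Tfini} and no condition \eqref{C(p)}. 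Where you genuinely add content is the last step, which the paper's one-line remark conceals: transience of $\p(Q_0)$ by itself controls only the visits to $(u,0)$ at $W$-return times, and there is no analogue of the $C(p)$/conditional Borel--Cantelli mechanism of Theorem \ref{pA} available for the levels $W=\pm1$. Your excursion-occupation argument supplies the missing quantitative ingredient: by \eqref{3.14} the coordinate $W$ is an autonomous chain with $0$ recurrent by \eqref{Tfini}, so the excursion occupation measure $\pi$ is invariant and finite at each state accessible from $0$ (recurrence of $0$ forces $y\to 0$, whence $1=\pi(0)\ge \pi(y)p^{(n)}(y,0)$ --- a point worth writing out, since no irreducibility is assumed), while transience of the embedded walk gives $\widetilde G(0,y)\le \widetilde G(0,0)<\infty$; the convolution bound $\sum_y h(u-y)\widetilde G(0,y)\le \widetilde G(0,0)\pi(1)$ then yields a.s.\ finitely many visits to each segment $\{u\}\times\{-1,0,1\}$, and countable intersection over $u$ finishes the proof. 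As a bonus, your route makes transparent why, in contrast with Theorem \ref{pA}, no uniform-in-$i$ hypothesis is needed in this transience statement.
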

\noindent The proof of this result is analogous to the proof of Theorem \ref{pA} and
left to the reader. Note the asymmetry. The hypothesis is that
$Q_0$ is successfully coupled with $Q_i$, instead of $Q_i$ with
$Q_0$.

\vspace{0.2cm}

\noindent Theorem \ref{pA} solves in particular the recurrence part in Benjamini's
original question. This is explained in the following example.
Here and in the remainder of this paper "Cauchy law" will always
be short for "symmetric Cauchy law".

\begin{expl}\label{ex1}
\em
Let $Q_0$ be the transition matrix of a simple random
walk on $\Z^2$: $Q_0((u,v),(u',v'))=1/4$ if
$(u',v')\in\{(u,v\pm 1),(u\pm 1,v)\}$.  We call $P_0:=\p(Q_0)$ the "discrete Cauchy law".
Observe that $P_0$ is recurrent.
Benjamini's process uses in an arbitrary order jumps of law $P_0$ and jumps of
law $P_1$, with $P_1(1)=P_1(-1)=1/2$.
Proving Benjamini's process is recurrent using Theorem \ref{pA} would require
finding $Q_1$ such that $P_1=\p(Q_1)$, and then to prove that $Q_0$ and $Q_1$
are both successfully coupled with $Q_0$. Such $Q_1$ does not exist.
So instead we will define $\wt{Q}_1$, such that $\wt{P}_1=\p(\wt{Q}_1)$ satisfies
\beq
\label{wtP1}
\wt{P}_1\{\pm 1\}=1/4 \text{ and }\wt{P}_1\{0\}=1/2\, .
\eeq
As far as recurrence is concerned, there is no difference between using $P_1$ or $\wt P_1$,
as we show in Lemma \ref{lemP0} below.
\end{expl}

So let us now define $\wt{Q}_1$ and the two different couplings.
Assume $(u,v,w)\in\Z^3$ are given. Let $(U,V)$ be a simple random walk on $\Z^2$
starting from $(u,v)$ and define the process $W$ by $W_0=w$ and $W_n=0$ for all
$n>0$. $W$ is deterministic and hence independent of $(U,V)$.
Then $\wh{U}=(U,V,W)$ and $(U,W)$ are Markov chains and
$\wt Q_1$, the transition matrix of $(U,W)$, has entries
$$
\wt Q_1[(u,w),(u\pm 1,0)] = 1/4 \text{ and }
\wt Q_1[(u,w),(u,0)] =1/2 \text{ for all } u,w.
$$
Moreover, it is straightforward that $\wt{Q}_1$ is successfully coupled with $Q_0$
and satisfies \eqref{C(p)} for $p=1/2$\,. Observe also that \eqref{wtP1} holds, as claimed.

Next we define the coupling of $Q_0$ with itself. We still let $(U,V)$ be a simple
random walk on $\Z^2$ starting from $(u,v)$. But this time $W$ is defined by
$W_0=w$ and for $n\ge 0$, by
\beq
W_{n+1}-W_n =\begin{cases} V_{n+1}-V_n & \text{ if } W_n V_n> 0 \text{ or if }
V_n=0 \text{ and }W_n>0, \\
- (V_{n+1}-V_n) & \text{ otherwise}.
\end{cases}
\label{jump1}
\eeq
Then $\wh{U}=(U,V,W)$ and $(U,W)$ are Markov chains, and the transition matrix
of $(U,W)$ is $Q_0$. Moreover, it is straightforward that this gives a successful
coupling of $Q_0$ with itself, and that $Q_0$ satisfies \eqref{C(p)} for $p=1/4$.
Thus the hypotheses of Theorem \ref{pA} are satisfied by $(\wt{P}_1,P_2)$,
where $P_2=P_0=\p(Q_0)$.  Therefore for all processes $(S_0,(S_n,i(n))_{n\ge 1})$
satisfying \eqref{loiconditionnelle} (or equivalently, for all
processes $(S_0,(S_n,i(n))_{n\ge 1})$ defined by \eqref{conditioni}),
the resulting processes $S$ will be recurrent. The fact that Benjamini's process
is recurrent is now a consequence of Lemma \ref{lemP0}.

\medskip
\noindent
{\bf Remark:} In Section 4 we shall use some consequences of this
example in the special case when $k=2$ and the corresponding
distributions $\wt Q_1$ and $Q_2 =Q_0$ are as defined a few lines before \eqref{jump1}.
Let now $I_n$ and $(U_n,V_n,W_n)$ be the processes defined by \eqref{eq:IM} and \eqref{eq:UM}.
Recall $(U_n,V_n)$ is a simple random walk on $\Z^2$.
First it needs to be pointed out that in this special case, the
function $G_2$ can be defined such that \eqref{jump1} is valid for
$n \in [\tau_\l, \tau_{\l+1})$ for some $\l$ with
$I_{n+1}=I_{\tau_\l+1} = 2$, and the function $G_1$ is defined such
that when $I_{n+1}=1$, then $W_{n+1}=W_n=0$.
We claim that
\beq
\begin{split}
U_n=V_n=0, & \,V_{n+1} = -1\text{ and }W_n \in \{-1,0,1\}\\
&\text{together imply }
U_n = W_n = 0 \text{ or }U_{n+1}=W_{n+1}=0.
\label{jump2}
\end{split}
\eeq
To see this assume that $U_n=V_n = 0$ and $V_{n+1} = -1$. Then $V_{n+1}-V_n = -1$.
If $W_n = 0$, then $U_n=W_n = 0$ by assumption and there is
nothing to prove. Assume then that $W_n = +1$. This excludes $I_{n+1}=1$, because when $I_{n+1}=1$, then
$W_n=W_{n+1}=0$. So $I_{n+1}=2$ and \eqref{jump1} applies. Thus
$$
W_{n+1} - W_n= V_{n+1}-V_n = -1, \text{ whence }W_{n+1} =W_n-1 =0.
$$
Moreover the jump from $(U_n,V_n)$ to $(U_{n+1},V_{n+1})$
can only be of size 1 (because $(U_n,V_n)$ is a simple random walk on $\Z^2$). But there already is a change of size 1 in the $V$-direction.
Thus we can only have $U_{n+1}-U_n=0$. This proves our claim
in case $W_n=1$. The case $W_n = -1$ is
entirely similar, since now $W_{n+1}- W_n = -(V_{n+1}-V_n).$
Thus \eqref{jump2} holds in general.

\begin{lem}\label{lemP0}
Let $(\wt{P}_i,1\le i\le k)$ be a sequence of probability distributions on $\Z$.
Assume that for all processes $(S_0,(S_n,i(n))_{n\ge 1})$ satisfying
\eqref{loiconditionnelle} with $\wt{P}_i$ instead of
$P_i$, the process $S$ is recurrent. Let now $\kI \subset \N_k$ be given and
let $(P_i,1\le i\le k)$ be defined by
$P_i=\wt{P}_i$ if $i\notin \kI$, and if $i\in \kI$,
$P_i\{u\}=\wt{P}_i\{U=u\mid U\neq 0\}$, for $u\neq 0$,
with $U$ a random variable of law $\wt{P}_i$.

Then for all processes $(S_0,(S_n,i(n))_{n\ge 1})$ satisfying \eqref{loiconditionnelle},
$S$ is recurrent as well.
\end{lem}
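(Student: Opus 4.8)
The plan is to realise any given $P$-process as a deterministic deletion of ``lazy'' steps from a suitable $\wt P$-process, and then to transfer recurrence across this time change. I would fix an arbitrary $(S_n,i(n))$ satisfying \eqref{loiconditionnelle} with the $P_i$; by the discussion around \eqref{conditioni} I may assume it is built from independent uniforms $(A_n),(B_n)$ and functions $F,G$. Writing $q_i=\wt P_i\{0\}$, one has $q_i=0$ for $i\notin\kI$, while for $i\in\kI$ the definition of $P_i$ as $\wt P_i$ conditioned on $\{U\ne 0\}$ gives the decomposition $\wt P_i=q_i\,\de_0+(1-q_i)P_i$ with $q_i<1$. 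Thus a $\wt P_i$-step is nothing but a $P_i$-step preceded by a geometric number of stays at the current site, and the whole point is to reinsert these stays.

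Concretely I would build, on an enlarged space carrying extra independent uniforms $(C_n)$, a process $(\wt S_n,\wt i(n))$ together with a pointer $p(n)$ counting how many genuine $S$-steps have already been reproduced. Set $\wt S_0=S_0$, $p(0)=0$, and at each time let the next index be the one the $S$-process would use, $\wt i(n+1)=i(p(n)+1)=F(A_{p(n)+1},(S_k,i(k))_{k\le p(n)})$. If this index $i$ lies outside $\kI$, copy the step: $\wt S_{n+1}-\wt S_n=G(B_{p(n)+1},i)$ and $p(n+1)=p(n)+1$. If $i\in\kI$, consult $C_{n+1}$: when $C_{n+1}<q_i$ make a null step ($\wt S_{n+1}=\wt S_n$, $p(n+1)=p(n)$), and otherwise make the genuine step $G(B_{p(n)+1},i)$ with $p(n+1)=p(n)+1$. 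Calling a step \emph{productive} when the pointer advances, the productive steps reproduce $(S_k,i(k))$ exactly, so that $\wt S_m=S_{p(m)}$ for every $m$, while the underlying $S$-history is recovered from $(\wt S_\ell,\wt i(\ell))_{\ell\le n}$ by erasing the null steps.

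I would then verify that $(\wt S_n,\wt i(n))$ satisfies \eqref{loiconditionnelle} with the $\wt P_i$: given the past through time $n$ and the chosen index $i$, the variables $B_{p(n)+1}$ and $C_{n+1}$ are fresh, so the conditional law of the next step is $P_i$ when $i\notin\kI$, and $q_i\de_0+(1-q_i)P_i=\wt P_i$ when $i\in\kI$, which is precisely \eqref{loiconditionnelle}. By hypothesis $\wt S$ is then recurrent. To conclude I would note that each index attempt is either immediately productive ($i\notin\kI$) or followed by a Geometric$(1-q_i)$ number of null steps, almost surely finite since $q_i<1$; as the $S$-process has infinitely many steps, productive steps occur infinitely often and $p(m)\to\infty$ almost surely. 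Since $\wt S_m=S_{p(m)}$, any site visited infinitely often by $\wt S$ is then visited infinitely often by $S$, giving recurrence of $S$.

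The step needing the most care is checking that $\wt i(n+1)$ is a bona fide index rule for the $\wt P$-process, that is, a function of the $\wt S$-history and fresh randomness. The difficulty is that during a run of null steps the pointer $p(n)$ does not advance, so the uniform $A_{p(n)+1}$ is re-used; the resolution is that along such a run the index should simply stay equal to $\wt i(n)$, and a null step can be detected intrinsically from the $\wt S$-history by the event $\{\wt i(n)\in\kI$ and $\wt S_n=\wt S_{n-1}\}$ --- which is legitimate because for $i\in\kI$ the law $P_i$ charges no mass at $0$, so an index in $\kI$ that fails to move can only come from a null step. Fresh $A$-randomness is thus consumed only when a new $S$-step attempt begins, keeping $B_{p(n)+1}$ and $C_{n+1}$ genuinely fresh at the productive and null decisions used above.
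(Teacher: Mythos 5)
Your proof is correct, and it rests on exactly the same core device as the paper's: the decomposition $\wt P_i=q_i\de_0+(1-q_i)P_i$ for $i\in\kI$, together with the crucial observation that lazy steps are \emph{intrinsically detectable} from the $(\wt S,\wt i)$-history via the event $\{\wt i(n)\in\kI,\ \wt S_n=\wt S_{n-1}\}$, since $P_i$ charges no mass at $0$ for $i\in\kI$ --- this is precisely the paper's definition \eqref{wtF1} of $\wt F$, and your pointer $p(n)$ is the inverse of the paper's time change $\rho_\l$ from \eqref{Sr}. Where you genuinely diverge is in the transfer-of-law step. The paper constructs $(\wt S_n,\wt i(n))$ abstractly from the \emph{same} uniforms $(A_n),(B_n)$ via the modified rule $\wt F$ in \eqref{wtF1}--\eqref{wtF2} (no auxiliary coins: the lazy decision is internalized in $\wt G$, which consumes a fresh $B$ at every step), so that no pathwise identity holds between $\wt S$ and $S$; it then proves by induction on finite-dimensional marginals (\eqref{marginales} equals \eqref{marginalesbis}, using the geometric summation $\sum_{K\ge1}\wt P_i(0)^{K-1}\wt P_i(u)=P_i(u)$ of \eqref{Stau}) that the de-lazified skeleton $(\wt S_{\rho_\l},\wt i(\rho_\l))$ has the law of $(S_\l,i(\l))$. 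You instead enlarge the space with coins $(C_n)$ and couple $\wt S$ to a \emph{given} realization of $S$, obtaining $\wt S_m=S_{p(m)}$ pathwise by construction; the burden then shifts to checking that $(\wt S_n,\wt i(n))$ is a legitimate process satisfying \eqref{loiconditionnelle} with the $\wt P_i$ (freshness of $B_{p(n)+1}$ and $C_{n+1}$, and measurability of the index rule), which you handle correctly --- the intrinsic-detection point you flag is exactly where the paper's $\wt F$ earns its keep. Your route buys a more transparent conclusion (no marginal induction; recurrence transfers along the explicit identity $\wt S_m=S_{p(m)}$ once $p(m)\to\infty$, which your geometric-run argument gives), while the paper's version stays within the original driving randomness and exhibits the lazy process explicitly as a member of the admissible class through a concrete $\wt F$. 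Two small remarks: your aside that $q_i=0$ for $i\notin\kI$ is not implied by the hypotheses (the lemma only sets $P_i=\wt P_i$ off $\kI$, and $\wt P_i\{0\}$ may be positive there), but it is harmless since for $i\notin\kI$ you copy the step verbatim and, consistently with the paper's definition of $t_\l$ in \eqref{wtt1}, a zero displacement with index outside $\kI$ still counts as productive; and a fully rigorous freshness check for $B_{p(n)+1}$ conditional on the enlarged history requires the standard argument for randomly indexed i.i.d.\ variables with a measurable consumption rule, which is routine.
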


\begin{proof} First note that the hypothesis on the $\wt{P}_i$'s means that for any choice of
$\wt{F}:[0,1]\times\cup_{n\ge 0}(\Z\times\N_k)^n\to \N_k$, the process defined by
\eqref{conditioni} (with $\wt{F}$ and $\wt{G}$ in place of $F$ and $G$ respectively,
and $\wt{G}$ associated to the $\wt{P}_i$'s) is recurrent.

The intuition for this lemma is clear. A walker using $\wt P_i$ as
distribution for his displacement stands still with probability
$\wt P_i(0)$. In fact when he arrives at a new site he stands still
a geometric number
of times and then makes a displacement with distribution $P_i$.
The standing still has no influence on the collection of sites
visited by the walker and hence does not influence recurrence.
Recurrence will be the same whether $\wt P_i$ or $P_i$ is used.
A complication arises because we have to deal not with sequences
$(S_n)$  but with sequences $(S_n,i(n))$, and even the latter
sequences are not Markovian.

Let now $(S_0,(S_n,i(n))_{n\ge 1})$ be a process satisfying \eqref{loiconditionnelle}.
To simplify, we take $S_0=0$. As is explained in the
introduction, such a process can be constructed with functions
$F:[0,1]\times\Z\times\cup_{n\ge 0}(\Z\times\N_k)^n\to \N_k$, $G:[0,1]\times\N_k\to \Z$
and independent sequences $(A_\l)_{\l\ge 0}$ and $(B_\l)_{\l\ge 0}$ of i.i.d.
uniformly distributed random variables on $[0,1]$: for $n\ge 0$,
$$i(n+1)=F(A_{n+1},S_0,(S_\l,i(\l))_{\l\le n})$$
and
$$S_{n+1}-S_n=G(B_{n+1},i(n+1)).$$
Here $G$ is such that the law of $G(B_1,i)$ is $P_i$.

Let now $\wt{F}:[0,1]\times \Z\times \cup_{n\ge 0}(\Z\times\N_k)^n\to \N_k$ be defined by
\begin{eqnarray}
\label{wtF1}
\wt{F}(a,s(0),(s(\l),j(\l))_{\l\le n})=j(n),
\end{eqnarray}
if $j(n)\in \mathcal{I}$ and $s(n)=s(n-1)$, and otherwise by
\begin{eqnarray}
\label{wtF2}
\wt{F}(a,s(0),(s(\l),j(\l))_{\l\le n})=F(a,s(0),(s(t_\l),j(t_\l))_{\l\le m}),
\end{eqnarray}
where $t_0=0$,
\beq
\label{wtt1}
t_\l = \inf\{r\in (t_{\l-1},n]: ~ s(r)\neq s(r-1) \hbox{ or }
j(r)\notin \mathcal{I} \} \quad \text{for }\l\ge 1,
\eeq
and
\beq
\label{wtt2}
m =\sup\{\l\ :\ t_\l<\infty\}.
\eeq
Note that \eqref{wtF1}-\eqref{wtt2} are merely the {\it
definitions} of the non-random functions $t_\l, m$ and $\wt F$
at a generic point $(a,s(0),(s(\l),j(\l))_{\l \le n})$ of their domains.
Note also that, by convention, $t_\l = \infty$ if the set in the
right hand side of \eqref{wtt1} is empty. In particular this is
the case for $\l > n$.

Let $\wt{G}:[0,1]\times \N_k\to\Z$ be such that the law of $\wt{G}(B_1,i)$ is
$\wt{P}_i$. Define the random quantities $\wt{S}_0$ and $(\wt{S}_n,\wt{i}(n))$ by
$\wt{S}_0=0$ and for $n\ge 0$,
\begin{eqnarray}
\label{wti}
\wt{i}(n+1)=\wt{F}(A_{n+1},\wt{S}_0,(\wt{S}_\l,\wt{i}(\l))_{\l\le n}),
\end{eqnarray}
and
\begin{eqnarray}
\wt{S}_{n+1}-\wt{S}_n=\wt{G}(B_{n+1},\wt{i}(n+1)).
\end{eqnarray}
Equation \eqref{wtF1} implies that
\begin{eqnarray}
\label{inin+1}
\wt{i}(n+1)=\wt{i}(n)\quad  \text{if }\wt{i}(n) \in \kI \text{ and }\wt{S}_n=\wt{S}_{n-1}.
\end{eqnarray}
Let $\rho_0=0$ and $\rho_\l = \inf\{r>\rho_{\l-1}: ~ \wt{S}_r\neq \wt{S}_{r-1}
\hbox{ or } \wt{i}(r)\notin \mathcal{I}\}$ for $\l\ge 1$.
Note that $\rho_n$ is essentially the value of $t_n$ at the random place
$(\wt S_\l,\wt i(\l))_{\l \le n}$. By definition
\begin{eqnarray}
\label{Sr}
\wt{S}_r=\wt{S}_{\rho_\l} \quad \text{for all }r\in [\rho_\l,\rho_{\l+1}),
\end{eqnarray}
and
\begin{eqnarray}
\label{Sr2}
\wt{S}_{\rho_{\l+1}}\neq \wt{S}_{\rho_\l} \quad \text{if  }\wt{i}(\rho_\l+1)\in \kI.
\end{eqnarray}
Moreover, \eqref{inin+1} implies (by induction on $r$) that
$\wt{i}(r)=\wt{i}(\rho_\l+1)$ for all $r\in (\rho_\l,\rho_{\l+1}]$, and
\begin{eqnarray}
\label{wti2}
\wt{i}(\rho_{\l+1})\ =\ \wt{i}(\rho_\l+1) &=& \wt{F}(A_{\rho_\l+1},\wt{S}_0,(\wt{S}_r,\wt{i}(r))_{r\le \rho_\l})\\
&=& \nonumber F(A_{\rho_\l+1},\wt{S}_0,(\wt{S}_{\rho_r},\wt{i}(\rho_r))_{r\le \l}),
\end{eqnarray}
where the last equality follows from \eqref{wtF2}.
Now, for any $i\in \N_k$ and $u\in \Z$,
if $\kF_n=\sigma((A_\l,B_\l)_{\l\le n})$,
\begin{eqnarray}
\label{Stau}
\pp\{\wt{S}_{\rho_{\l+1}}-\wt{S}_{\rho_\l}=u\mid \wt{i}(\rho_\l+1)=i,\ \kF_{\rho_\l}\}=P_i(u).
\end{eqnarray}
Indeed when $i\in \mathcal{I}$ and $u\neq 0$, the left hand side is equal to
$$\sum_{K\ge 1} \pp\{\{\wt{G}(B_{\rho_\l+K},i)=u\}\cap\{\rho_{\l+1}-\rho_\l=K\}
\mid \wt{i}(\rho_\l+1)=i,\ \kF_{\rho_\l}\}, $$
which is equal to
$$\sum_{K\ge 1} \wt{P}_i(0)^{K-1}\wt{P}_i(u)=P_i(u).$$
If $i\in \mathcal{I}$ and $u=0$, both sides of \eqref{Stau}
equal $0$ by \eqref{Sr2} and the definition of $\kI$.
When $i(\rho_\l+1) = i$ and $i\notin \mathcal{I}$, then $\rho_{\l+1}=\rho_\l+1$
and \eqref{Stau} follows from the fact that $\wt{P}_i=P_i$.

Finally we claim that \eqref{wti2} and \eqref{Stau} show that
$(\wt{S}_{\rho_\l},\wt{i}(\rho_\l))$ has the same law as $(S_\l,i(\l))$:
indeed we shall show by induction on $\l$ that for any
sequence $j(1),\dots,j(\l) \in \N_k$, $u_1,\dots,u_\l \in \Z$,
\beq
\label{marginales}
\pp\left\{\wt{S}_{\rho_{\l+1}}-\wt{S}_{\rho_\l}=u_{\l+1},\
\wt{i}(\rho_{\l+1})=j(\l+1), \dots, \wt{S}_{\rho_1}=u_1,\wt{i}(\rho_1)=j(1)\right\},
\eeq
is equal to
\beq
\label{marginalesbis}
\pp\left\{S_{\l+1}-S_\l=u_{\l+1},\
i(\l+1)=j(\l+1), \dots, S_1=u_1,i(1)=j(1)\right\}.
\eeq
But \eqref{marginales} is equal to
\begin{eqnarray*}
 P_{j({\l+1})}(u_{\l+1}) \pp\left\{\wt{i}(\rho_{\l+1})=j(\l+1), \dots,
  \wt{S}_{\rho_1}=u_1,\wt{i}(\rho_1)=j(1)\right\},
\end{eqnarray*}
by \eqref{Stau}. By using \eqref{wti2} we see that the second
factor in this last expression is equal to
\begin{eqnarray*}
&&\pp\left\{F(A_{\rho_\l+1},s_0,(s_r,j(r))_{r\le \l})=j(\l+1)\right\} \\
&\times& \pp\left\{\wt{S}_{\rho_{\l}}-\wt{S}_{\rho_{\l-1}}=u_{\l},\
\wt{i}(\rho_{\l})=j(\l), \dots, \wt{S}_{\rho_1}=u_1,\wt{i}(\rho_1)=j(1)\right\},
\end{eqnarray*}
where for all $r$, $s_r:=u_1+\dots+u_r$. Then an induction procedure shows that
\eqref{marginales} is equal to \eqref{marginalesbis},
as claimed.

Moreover, by assumption $\wt{S}$ is recurrent and \eqref{Sr} implies that
$(\wt{S}_{\rho_\l},\l\ge 0)$ is also recurrent. This proves the lemma.
\end{proof}

\noindent We finish with this last class of examples
\begin{expl}
\label{lastexpl} {\em Take for $Q_0$ the transition matrix of a Markov chain $(U,V)$
such that $U$ and $V$ are
both Markovian and independent of each other, and such that $P_0=\p(Q_0)$ is recurrent.
Assume that for all $i\in
[1,k]$, $Q_i$ is the transition matrix of a Markov chain $(U,W_i)$ such that $U$
and $W_i$ are both Markovian and
independent of each other (the chain $U$ being the same for $Q_0$ and for $Q_i$).
Assume that all $Q_i$'s are
translation invariant (note that this hypothesis only concerns the Markov chain $U$).
Suppose also that for all
$i$ it is possible to couple the chains $V$ and $W_i$ such that $(V,W_i)$ is
Markovian and such that
if $|W_i(0)|\le|V(0)|+1$ then for all $n\ge 0$, $|W_i(n)|\le |V(n)|+1$.
 Let now  $U$ be a
chain independent of this Markov process $(V,W_i)$. Then $(U,V)$ and $(U,W_i)$ are
both Markovian respectively
with transition matrices $Q_0$ and $Q_i$. This coupling of $(U,V)$ and $(U,W_i)$
shows that $Q_i$ is successfully
coupled with $Q_0$. Assume also that the $Q_i$'s, $i\le k$, satisfy \eqref{C(p)}
for some positive $p$, uniformly in
$i$. Then the hypotheses of Theorem \ref{pA} are satisfied.

This can be applied to the following: let $(A_n)$ and $(B_n)$ be two
independent sequences of i.i.d. random variables uniformly
distributed on $[0,1]$. Let $p\in [0,1/2)$ and let $U(n)=\sum_{i=1}^n (1_{\{A_i\ge
p\}}-1_{\{A_i< 1-p\}})$. Let $V$ be the simple
random walk on $\Z$ defined by $V(0)=v_0$ and
$$V(n)-V(n-1)=1_{\{B_n< 1/2\}}-1_{\{B_n\ge 1/2\}}.$$
Let $(p_i(w): i\ge 1 \hbox{ and }w\ge 0)$
be such that $p_i(w)\in [0,1/2]$ for all $w\in \Z$.
Define $W_i$ by $W_i(0)=0$ and on the event $\{W_i(n-1)=w\}$,
\begin{eqnarray*}
W_i(n)-w
&=& \big[1_{\{B_n< p_i(w)\}}-1_{\{B_n\ge p_i(w)\}}\big]1_{\{w\ge 1\}}\\
&&  + \big[1_{\{B_n \ge p_i(w)\}}-1_{\{B_n< p_i(w)\}}\big]1_{\{w\le -1\}}\\
&&+ \big[1_{\{B_n< 1/2-p_i(0)\}}-1_{\{B_n\ge 1/2+p_i(0)\}}\big] 1_{\{w=0\}}.
\end{eqnarray*}
Then one immediately checks that $|W_i(n)|\le |V(n)|+1$ for all $n\ge 0$,
and the resulting transition matrices $Q_i$ are successfully coupled with
$Q_0$. Moreover Condition \eqref{general} is satisfied for all $Q_i$'s with
$(1-2p)/2$ instead of $p$. Thus for any such choice of $(p_i(w))$, we can
apply Theorem \ref{pA} and find in this way many examples
of recurrent processes. However, given the $(p_i(w))$'s,
it is usually not easy to describe explicitly the associated laws $P_i$.}
\end{expl}

\subsection{Continuous case}
We present now an analogous coupling method (in the spirit of Example \ref{lastexpl}) when the laws $P_i$ are defined on $\R$, because in this case, by using stochastic calculus, we can give more explicit examples of $P_i$'s, which can be used to construct recurrent processes $\{S_n\}$ (see Proposition \ref{continuousex} below).

Let $B^{(1)}$ and $B^{(2)}$ be two independent Brownian motions started at $0$.
Let $(U_0,V_0,W_0)$ be a random variable in $\R\times\R^+\times\R^+$, independent of $B=(B^{(1)},B^{(2)})$.
For all $t>0$, set $U_t=U_0+B^{(1)}_t$.
Let $(\sigma_0,b_0):[0,+\infty)\to\R^2$
be some Lipschitz functions and $v_0\ge 0$ some constant. Then (see Exercice 2.14
p.385 in \cite{RY}) the stochastic differential equation
\beq
\label{SDEV}
V_t = V_0 + \int_0^t \sigma_0(V_s)\ dB^{(2)}_s +\int_0^t b_0(V_s)\ ds +L_t,
\eeq
with $L$ the local time in $0$ of $V$, admits a unique solution which is
measurable with respect to the filtration generated by $B^{(2)}$.

Consider next $(\sigma,b):(0,+\infty)\to\R^2$ some locally Lipschitz
functions and the stochastic differential equation
\beq
\label{SDE}
W_t=W_0 + \int_0^t \sigma(W_s)\ dB^{(2)}_s +\int_0^t b(W_s)\ ds, \qquad t<T\wedge e,
\eeq
where
$$e=\inf\{t\ge 0:\ W_t=+\infty\} \quad \textrm{and}\quad  T=\inf\{t\ge 0\ :\  W_t=0\}.$$
It is known (see for instance Exercise 2.10 p.383 in \cite{RY}) that if $\sigma$
and $b$ are locally Lipschitz, then equation \eqref{SDE} admits a unique solution
$W$ which is measurable with respect to the filtration generated by $B^{(2)}$.
When $(U_0,W_0)=(0,1)$ and when $(\sigma,b)$ is such that
\begin{eqnarray} \label{Tcontfini} T<e \quad \text{ almost surely},\end{eqnarray} we
denote by $P$ the law of $U_T$. Then, like in the discrete case, we
have $P=\p(\sigma,b)$ for some function $\p$. In the following, all $(\sigma,b)$
will be assumed to be locally Lipschitz and such that \eqref{Tcontfini} is satisfied.
Moreover, for $w>0$ we will denote by $\pp^{(\sigma,b)}_{w}$ the law of $(W_t)_{t\le T}$ when $W_0=w$.

We say that $(\sigma,b):(0,\infty)\to\R^2$ is {\it successfully coupled} with
$(\sigma_0,b_0)$ if for any solutions $V$
and $W$, respectively of \eqref{SDEV} and \eqref{SDE}, with $W_0\le V_0 + 1$, we have
$W_t\le V_t+1$ for all $t<T$.
Note that, by using a comparison theorem (see \cite{IW} Theorem 1.1 p.437), if for all $v>0$,
$\sigma(v+1)=\sigma_0(v)$ and $b(v+1)\le b_0(v)$, then $(\sigma,b)$ is
successfully coupled with $(\sigma_0,b_0)$.

Let $((\sigma_i,b_i), 0 \le i\le k)$, $k\le \infty$, be a
sequence of locally Lipschitz functions on $(0,\infty)^2$ such that for all
$i\in \N_k$, $(\sigma_i,b_i)$ is successfully coupled with
$(\sigma_0,b_0)$. For $i\in \N_k$, set $P_i=\p(\sigma_i,b_i)$.

Let $F:[0,1]\times\R\times \cup_{n\ge 0}(\R\times \N_k)^{n}\to \N_k$ be given. This function $F$ determines
the index $i$ in $(\sigma_i,b_i)$ which will govern the steps in our modified
random walk over a certain random time interval, as we make more
precise now.

Let $(A_n)_{n\ge 1}$ be a sequence of independent random variables uniformly distributed
on $[0,1]$. Assume that this sequence is independent of $B$. Let $V$ be the
solution of \eqref{SDEV}, with $V_0=0$.
Let $(U_0,I_0)$ be a random variable in $\R\times\N_k$, independent of $A$ and $B$.
Define $(\tau_n)_{n\ge 0}$ an increasing sequence of random times, and the
processes $(W^*_t)_{t<\tau_\infty}$ and $(I_t)_{t<\tau_\infty}$, with
\beq
\label{tau}
\tau_\infty:=\lim_{n\to\infty}\tau_n,
\eeq
by the following:
first $\tau_0=0$. Assume then that $(\tau_1,\dots,\tau_n)$ and $(I_t,W^*_t)_{0\le t\le
\tau_n}$
are defined and measurable with respect to the $\sigma$-field
$$\sigma((U_s,B_s)_{s\le \tau_n})\vee\sigma(A_1,\dots,A_n).$$
Assume moreover that $I_t=I_{\tau_{\l+1}}$ for $t\in (\tau_\l,\tau_{\l+1}]$ and
$\l\le n-1$, and that $W^*_{\tau_\l}=0$ for all $1\le \l\le n$. For $\l\le n$, set
$i(\l)=I_{\tau_\l}$ and $S_\l=U_{\tau_\l}$.
Then we define $i(n+1)$ by
$$i(n+1)=F(A_{n+1},S_0,(S_\l,i(\l))_{\l\le n}),$$
and $W^n$ as the solution of
$$W^n_t=1+\int_0^t \sigma_{i(n+1)}(W^n_s)\ dB^{\tau_n}_s +\int_0^t b_{i(n+1)}(W^n_s)\ ds, \qquad t\le T^{(n)},$$
where $T^{(n)}$ is the first time when $W^n$ reaches $0$, and where $B^{\tau_n}_s=B^{(2)}_{\tau_n+s}-B^{(2)}_{\tau_n}$. The
process $W^n$ is well-defined since $B^{\tau_n}$ is independent of $i(n+1)$. Let
$$\tau_{n+1}:=\tau_n+T^{(n)}.$$
Then set
$$W^*_t=W^n_{t-\tau_n} \hbox{ and } I_t=i(n+1) \quad \hbox{for } t\in (\tau_n,\tau_{n+1}].$$

\noindent This defines the sequence $\tau_n$ for all $n$ and $(W^*_t,I_t)$ for $t < \tau_\infty$.

\noindent Let now $\kF_t=\sigma((U_s,B_s,I_s)_{s \le t\wedge \tau_\infty})$. Then, $(\tau_n)_{n\ge 0}$
is a sequence of $\kF_t$-stopping times and like in the discrete setting, we have
\begin{lem} \label{lem32cont} For all $n\geq 0$, the conditional law of
$U_{\tau_{n+1}}-U_{\tau_n}$ given
$\kG_n :=\kF_{\tau_n}\vee \sigma(i(n+1))$ is $P_{i(n+1)}$.
\end{lem}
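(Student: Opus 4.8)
The plan is to read the statement off from the strong Markov property of the driving two–dimensional Brownian motion $B=(B^{(1)},B^{(2)})$ applied at the time $\tau_n$, exactly mirroring the discrete argument of Lemma \ref{lem3.2}. First I would record that every $\tau_n$ is a.s. finite: by the standing assumption \eqref{Tcontfini} each hitting time $T^{(n)}$ is a.s. finite, so $\tau_{n+1}=\tau_n+T^{(n)}<\infty$ follows by induction on $n$. Since $U_t=U_0+B^{(1)}_t$, the increment in question is $U_{\tau_{n+1}}-U_{\tau_n}=B^{(1)}_{\tau_{n+1}}-B^{(1)}_{\tau_n}=\wt B^{(1)}_{T^{(n)}}$, where I write $\wt B^{(j)}_s:=B^{(j)}_{\tau_n+s}-B^{(j)}_{\tau_n}$ for the post-$\tau_n$ increments of $B$ (so that $\wt B^{(2)}=B^{\tau_n}$), and $T^{(n)}$ is the first zero of the solution $W^n$ of \eqref{SDE} with coefficients $(\sigma_{i(n+1)},b_{i(n+1)})$ started at $1$ and driven by $\wt B^{(2)}$.

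The heart of the matter is an independence statement. I would enlarge the filtration to $\kA_t:=\sigma(B_s;\,s\le t)\vee\sigma(A_\l;\,\l\ge 1)$. Because the $A_\l$ are independent of $B$, the process $B$ is still a two-dimensional Brownian motion for $\kA_t$; and each $\tau_n$, being an $\kF_t$-stopping time (as noted just before the lemma) with $\kF_t\subset\kA_t$, is also an $\kA_t$-stopping time. The strong Markov property then gives that $\wt B=(\wt B^{(1)},\wt B^{(2)})$ is a standard two-dimensional Brownian motion independent of $\kA_{\tau_n}$. Now $i(n+1)=F\big(A_{n+1},(S_\l,i(\l))_{\l\le n}\big)$ is $\kA_{\tau_n}$-measurable, since $A_{n+1}\in\kA_0$ and the history $(S_\l,i(\l))_{\l\le n}=(U_{\tau_\l},I_{\tau_\l})_{\l\le n}$ is $\kA_{\tau_n}$-measurable; moreover $\kG_n=\kF_{\tau_n}\vee\sigma(i(n+1))\subset\kA_{\tau_n}$. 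Hence $\wt B$ is independent of $\kA_{\tau_n}$, in particular of the pair consisting of $\kF_{\tau_n}$ and $i(n+1)$.

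It remains to identify the conditional law. Conditionally on $\kA_{\tau_n}$, on the event $\{i(n+1)=i\}$ the independence just established means that $\wt B$ is an ordinary two-dimensional Brownian motion while the coefficients $(\sigma_i,b_i)$ are frozen. Thus the pair $(\wt B^{(1)},W^n)$ has exactly the law of the pair $(B^{(1)},W)$ used in the very definition of $P_i=\p(\sigma_i,b_i)$: $W^n$ solves \eqref{SDE} with coefficients $(\sigma_i,b_i)$ from $1$, $T^{(n)}$ is its first zero, and $\wt B^{(1)}$ is an independent Brownian motion. Consequently the conditional law of $\wt B^{(1)}_{T^{(n)}}=U_{\tau_{n+1}}-U_{\tau_n}$ given $\kA_{\tau_n}$ equals $P_{i(n+1)}$. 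Since this conditional law is a function of $i(n+1)$ alone, hence $\kG_n$-measurable, the tower property applied to $\kG_n\subset\kA_{\tau_n}$ yields that the conditional law given $\kG_n$ is again $P_{i(n+1)}$, which is the assertion.

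I expect the only delicate point to be the filtration bookkeeping in the middle paragraph: one must enlarge $\kF_t$ by the entire sequence $(A_\l)$ so that $i(n+1)$ becomes $\kA_{\tau_n}$-measurable, while simultaneously keeping $B$ a Brownian motion and $\tau_n$ a stopping time for $\kA_t$, so that the strong Markov property still delivers the independence of the post-$\tau_n$ increments from $i(n+1)$. Once this independence is in hand, the remaining identification with $\p(\sigma_i,b_i)$ and the passage from $\kA_{\tau_n}$ to $\kG_n$ are a direct transcription of the discrete proof of Lemma \ref{lem3.2}.
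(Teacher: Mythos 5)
Your proposal is correct and takes essentially the same route as the paper, whose proof is a terse statement of exactly what you establish in detail: that given $\kG_n$, the stopped process $W^n$ has law $\pp^{(\sigma_{i(n+1)},b_{i(n+1)})}_1$ while $U^n_t = U_{\tau_n+t}-U_{\tau_n}$ is an independent Brownian motion, whence the conclusion by the definition $\p(\sigma_{i(n+1)},b_{i(n+1)})=P_{i(n+1)}$. Your strong-Markov/enlarged-filtration bookkeeping (with the minor fix that $\kA_0$ should also contain $\sigma(U_0,I_0)$ so that the history $(U_{\tau_\l},I_{\tau_\l})_{\l\le n}$ is indeed $\kA_{\tau_n}$-measurable) merely supplies the details the paper leaves implicit.
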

\begin{proof} Given $\kG_n$, the law of $(W^n_t=W^*_{\tau_n+t})_{0\le t\le \tau_{n+1}-\tau_n}$ is $\pp^{(\sigma_{i(n+1)},b_{i(n+1)})}_1$ and $(U^n_t=U_{\tau_n+t}-U_{\tau_n})_{t\ge 0}$ is a Brownian motion independent of
$(W^n_t)_{0\le t\le \tau_{n+1}-\tau_n}$.
The lemma follows, since by definition $\p(\sigma_{i(n+1)},b_{i(n+1)})=P_{i(n+1)}$.
\end{proof}

\noindent This lemma implies that the sequence $(S_n,i(n))_{n\ge 0}$
has the same law as the process defined in the introduction by \eqref{conditioni}
(with $(S_0,i(0))=(U_0,I_0)$).
Moreover we have the following result:
\begin{prop}
\label{hyptau}
Assume that there exists positive constants $0<\alpha<1<\beta$, $\sigma_+$
and $b_+$ such that
\beq
\label{bshyp}
0\le \sigma_i(x)\le \sigma_+ \quad \textrm{and}\quad |b_i(x)|\le b_+\quad
\textrm{for all } x\in (\alpha,\beta) \quad \textrm{and all } 1\le i\le k.
\eeq
Then $\tau_\infty$, as defined in \eqref{tau}, is a.s. infinite for any
choice of $(i(n),n\ge 0)$.
\end{prop}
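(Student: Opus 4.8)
The plan is to show that a fixed positive amount of time is spent, with non‑negligible probability, on each excursion, so that infinitely many excursions have duration bounded below by a constant and hence $\tau_\infty=\sum_{n\ge 0}T^{(n)}=\infty$. Set $\eta:=\min(1-\alpha,\beta-1)>0$ (recall $0<\alpha<1<\beta$) and let $\theta_n$ denote the first exit time of the excursion $W^n$ (which satisfies $W^n_0=1$) from the interval $(\alpha,\beta)$. Since $W^n$ travels continuously from $1$ to $0$ and $0\notin(\alpha,\beta)$, the process must leave $(\alpha,\beta)$ before hitting $0$, so $\theta_n\le T^{(n)}$ for every $n$ (both are a.s.\ finite by \eqref{Tcontfini}). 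It therefore suffices to bound $\theta_n$ from below.

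The key step is a uniform lower estimate on $\theta_n$ using \eqref{bshyp}. Conditionally on $\kG_n=\kF_{\tau_n}\vee\sigma(i(n+1))$, the strong Markov property shows that $W^n$ is a diffusion with the fixed coefficients $(\sigma_{i(n+1)},b_{i(n+1)})$, started at $1$ and driven by the fresh Brownian motion $B^{\tau_n}$. For $t<\theta_n$ the path stays in $(\alpha,\beta)$, where \eqref{bshyp} gives $0\le\sigma_i\le\sigma_+$ and $|b_i|\le b_+$; writing $W^n_t-1=M_t+D_t$ with $M_t=\int_0^t\sigma_{i(n+1)}(W^n_s)\,dB^{\tau_n}_s$ and $D_t=\int_0^t b_{i(n+1)}(W^n_s)\,ds$, one has $|D_t|\le b_+t$ and $\langle M\rangle_t\le\sigma_+^2\,t$ for $t<\theta_n$. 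On $\{\theta_n<\delta\}$ the path reaches the boundary, so $|W^n_{\theta_n}-1|\ge\eta$, forcing $\sup_{t\le\delta}|M_{t\wedge\theta_n}|\ge\eta-b_+\delta$. A time change of $M$ (Dambis--Dubins--Schwarz), whose clock is at most $\sigma_+^2\delta$, together with the reflection principle, then yields
\beq
\pp\{\theta_n<\delta\mid\kG_n\}\le \pp\Big\{\sup_{u\le\sigma_+^2\delta}|\wt{B}_u|\ge \eta-b_+\delta\Big\}
\eeq
for a standard Brownian motion $\wt B$. Choosing $\delta$ so small that $b_+\delta\le\eta/2$ and $\pp\{\sup_{u\le\sigma_+^2\delta}|\wt B_u|\ge\eta/2\}\le 1/2$ — possible because this probability tends to $0$ as $\delta\downarrow0$ — gives $\pp\{\theta_n\ge\delta\mid\kG_n\}\ge 1/2$ for all $n$ and every admissible value of $i(n+1)$.

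It follows that $\pp\{T^{(n)}\ge\delta\mid\kG_n\}\ge1/2$ for all $n$, so $\sum_n\pp\{T^{(n)}\ge\delta\mid\kG_n\}=\infty$ almost surely. Since $\{T^{(n)}\ge\delta\}\in\kG_{n+1}$, Lévy's extension of the Borel--Cantelli lemma (Theorem 12.15 in \cite{W}, already used above) implies that $\{T^{(n)}\ge\delta\}$ occurs for infinitely many $n$ almost surely, whence $\tau_\infty=\sum_{n\ge0}T^{(n)}=\infty$ almost surely. Because the conditional estimate depends only on the global constants $\sigma_+,b_+,\alpha,\beta$ and is uniform over $i\in\{1,\dots,k\}$, the conclusion holds for any choice of the indices $(i(n))_{n\ge0}$.

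I expect the main obstacle to be making the lower bound on the exit time genuinely uniform in both $i$ and $n$: one must verify that only the fixed constants $\sigma_+,b_+,\alpha,\beta$ enter, which is exactly what the boundedness assumption \eqref{bshyp} on the strip $(\alpha,\beta)$ provides, and that the time-change argument is legitimately carried out up to the stopping time $\theta_n$ (so that $\langle M\rangle$ is controlled only while the path remains in $(\alpha,\beta)$). The remaining ingredients — the comparison $\theta_n\le T^{(n)}$ and the conditional Borel--Cantelli step — are routine once the uniform estimate is in place.
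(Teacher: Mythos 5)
Your proof is correct, and it rests on the same core reduction as the paper's: bound the excursion length $T^{(n)}$ below by the exit time of $W^n$ from the strip $(\alpha,\beta)$, where \eqref{bshyp} controls the coefficients; split $W^n-1$ into martingale plus drift; choose the time scale so small that the drift cannot produce the displacement $\eta=(1-\alpha)\wedge(\beta-1)$ (your condition $b_+\delta\le\eta/2$ is exactly the paper's restriction $\epsilon<((1-\alpha)\wedge(\beta-1))/(2b_{\max})$); control the martingale part by a maximal estimate; and conclude with the conditional Borel--Cantelli lemma. Where you genuinely diverge is in the final bookkeeping. The paper proves a \emph{quantitative} small-time lemma, $\pp^{(\sigma,b)}_1\{T<\epsilon\}\le C\epsilon^r$ via Doob's inequality, takes $r=2$, applies it with $\epsilon=n^{-1}$, and uses the convergence half of conditional Borel--Cantelli: since $\sum_n\pp\{\tau_{n+1}-\tau_n<n^{-1}\mid\kG_n\}\le C\sum_n n^{-2}<\infty$, almost surely $T^{(n)}\ge n^{-1}$ for all large $n$, so $\tau_\infty\ge\sum n^{-1}=\infty$. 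You instead prove a purely \emph{qualitative} uniform bound $\pp\{T^{(n)}\ge\delta\mid\kG_n\}\ge 1/2$ for one fixed $\delta$, and invoke the divergence half of L\'evy's extension to get $T^{(n)}\ge\delta$ infinitely often. Your route is slightly more economical: no polynomial rate is needed, so the Dambis--Dubins--Schwarz step plus the trivial fact that $\pp\{\sup_{u\le\sigma_+^2\delta}|\wt B_u|\ge\eta/2\}\to 0$ suffices (in fact Doob's $L^2$ inequality would replace DDS in one line, bounding this probability by $16\sigma_+^2\delta/\eta^2$); the paper's lemma is stronger---a bound of order $\epsilon^r$ for every $r\ge1$---but only the $r=2$ summability is ever used. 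Your conditioning and measurability claims are sound: given $\kG_n$ the excursion law is $\pp^{(\sigma_{i(n+1)},b_{i(n+1)})}_1$ (as in Lemma \ref{lem32cont}), so the estimate holds on each event $\{i(n+1)=i\}$ uniformly in $i\le k$ (even for $k=\infty$), and $\{T^{(n)}\ge\delta\}\in\kF_{\tau_{n+1}}\subset\kG_{n+1}$ as required by Theorem 12.15 in \cite{W}. One point worth stating explicitly: the finiteness of every $\tau_n$---needed so that infinitely many excursions exist at all---comes from the standing assumption \eqref{Tcontfini} on all the $(\sigma_i,b_i)$, which you correctly invoke when asserting $\theta_n\le T^{(n)}<\infty$.
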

\begin{proof}
We start with a lemma. For $z\in\R$, let $T_z:=\inf\{t:W^*_t=z\}$. In particular, $T=T_0$.
\begin{lem}
Let $(\sigma,b)$ be locally Lipschitz and $0<\alpha<1<\beta$ some constants.
Then for all $r\ge 1$, there
exists a constant $C>0$ depending only on $r$, $\alpha$, $\beta$ and
$\sigma_{\max} :=\sup_{x\in [\alpha,\beta]} |\sigma(x)|$,
such that
$$\pp^{(\sigma,b)}_1\{T<\epsilon\} \le C \epsilon^r \qquad
\textrm{for all } \epsilon< ((1-\alpha)
\wedge (\beta-1))/(2b_{\max}),$$
where $b_{\max} :=\sup_{x\in [\alpha,\beta]} |b(x)|$.
\end{lem}
\begin{proof}
First we have
\begin{eqnarray*}
\pp^{(\sigma,b)}_1\{T<\epsilon \} \le  \pp^{(\sigma,b)}_1\{T_\alpha<\epsilon \} \le \pp^{(\sigma,b)}_1\{T_\alpha\wedge T_\beta <\epsilon \}.
\end{eqnarray*}
Next set, for all $t<T$,
$$H(t):=\int_0^t \sigma(W_s)\ dB^{(2)}_s + \int_0^t b(W_s)\ ds.$$
We have
$$\pp^{(\sigma,b)}_1\{T_\alpha\wedge T_\beta <\epsilon\}\le \pp\left\{\sup_{t\le \epsilon} \left|H(t\wedge T_\alpha\wedge
T_\beta) \right|\ge (1-\alpha)\wedge (\beta-1)\right\}.$$
If $\epsilon <((1-\alpha)\wedge (\beta-1))/(2b_{\max})$, this last term is bounded by
$$\pp\left\{\sup_{t\le \epsilon} \left|\int_0^{t\wedge T_\alpha\wedge T_\beta} \sigma(W_s)\ dB^{(2)}_s \right| \ge \frac{(1-\alpha)\wedge (\beta-1)}{2}\right\},$$
which by Doob's inequality (Theorem (1.7) p.54 in \cite{RY}) is bounded by $C\epsilon^r$ for some constant $C>0$, which depends only on $r$, $\sigma_{\max}$, $\alpha$ and $\beta$. This concludes the proof of the lemma.
\end{proof}

\noindent Taking $r=2$ in this lemma, we have that for $n > ((1-\alpha)
\wedge (\beta-1))^{-1}2b_+$
$$\pp\{\tau_{n+1}-\tau_n< n^{-1}\mid \mathcal{G}_n\}\le Cn^{-2}.$$
Proposition \ref{hyptau} follows now from
the conditional Borel-Cantelli Lemma (Theorem 12.15 in
\cite{W}) by a standard argument.
\end{proof}

Let us give now $p=(p_\epsilon)_{\epsilon>0}$ such that $p_\epsilon \in (0,1)$
for all $\epsilon>0$. Let $(\sigma,b)$ be locally Lipschitz and let $W$ be the
solution of \eqref{SDE}, with $W_0=w_0$. Remember that $T=\inf\{t>0:W_t=0\}$.
We write $C'(p) = C'(p,\sigma,b))$ for the property
\beq
\label{I'p}
\pp\{U_T\in [-\epsilon,\epsilon]\}>p_\epsilon \qquad
\textrm{for all } \epsilon>0 \quad \textrm{and all } w_0\in (0,1],
\eeq
where $U$ is a Brownian motion starting from $0$ independent of $W$.

We say that a process on $\R$ is recurrent, if for all $\epsilon >0$ and all $x \in \R$,
it returns a.s. infinitely often to $[x-\epsilon,x+\epsilon]$. Similarly a law $P$
is recurrent if the associated random walk is recurrent. The analogue of Theorem \ref{pA}
is then the following theorem:
\begin{theo}
\label{pB} Let $p=(p_\epsilon)_{\epsilon>0}$ be given. Let $(\sigma_0,b_0)$ be a Lipschitz function and $((\sigma_i,b_i),1\le i\le k)$
be a sequence of locally Lipschitz functions. Assume \eqref{Tcontfini} holds for $i\in\{0\}\cup\N_k$. Set $P_i=\p(\sigma_i,b_i)$. Assume that there exists $0<\alpha<1<\beta$ and
positive constants $\sigma_+$ and $b_+$ such that \eqref{bshyp} holds.
Assume moreover that $P_0=\p(\sigma_0,b_0)$ is a recurrent law on $\R$ and that
for each $i\in \N_k$,  $(\sigma_i,b_i)$ is successfully coupled with $(\sigma_0,b_0)$
and satisfies \eqref{I'p}. Then for any $(S_0,(S_n,i(n))_{n\ge 1})$ which satisfies
\eqref{loiconditionnelle}, the process $\{S_n\}_{n \ge 0}$ is recurrent.
\end{theo}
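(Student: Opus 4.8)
The plan is to follow the proof of Theorem \ref{pA} essentially line by line, with the lattice excursions there replaced by the diffusion excursions built in \eqref{SDEV}--\eqref{SDE}. First I would invoke the construction preceding Lemma \ref{lem32cont}: by that lemma the pair $(U_{\tau_n},I_{\tau_n})_{n\ge 0}$ has exactly the law of a process obeying \eqref{loiconditionnelle} with $P_i=\p(\sigma_i,b_i)$, so it suffices to show that $\{U_{\tau_n}\}_{n\ge 0}$ returns to $[x-\ep,x+\ep]$ infinitely often for every $x\in\R$ and every $\ep>0$. Under the bounds \eqref{bshyp}, Proposition \ref{hyptau} gives $\tau_\infty=\infty$, so the $\tau_n$ are well defined for all $n$ and are $\kF_t$-stopping times. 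I would then record the geometric input of the coupling: at each $\tau_n$ the excursion starts from height $1\le V_{\tau_n}+1$ and is driven by the same $B^{(2)}$ as the reflected solution $V$ of \eqref{SDEV}, so the defining inequality of a successful coupling propagates across the excursion; gluing the excursions gives $0\le W^*_t\le V_t+1$ for all $t<\tau_\infty=\infty$. This is the continuous counterpart of the bound $|W|\le |V|+1$ used in Theorem \ref{pA}, and it says that whenever $V_t$ is near $0$ the height $W^*_t$ lies in $[0,1]$.

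The heart of the argument is the analogue of ``$(U,V)=(u,0)$ infinitely often'' that drove Lemma \ref{irred} and Theorem \ref{pA}. Recurrence of $P_0=\p(\sigma_0,b_0)$ means that a random walk with i.i.d. $P_0$-steps visits every neighbourhood of every point infinitely often. I would realise such a walk inside the present picture by sampling the (independent) horizontal Brownian motion $U=U_0+B^{(1)}$ along the successive instants at which a height process governed throughout by $(\sigma_0,b_0)$ completes a descent from level $1$ to level $0$: by definition of $\p$, the horizontal increment over each such descent is distributed as $P_0$, and the strong Markov property makes these increments i.i.d. Recurrence of $P_0$ then furnishes an increasing sequence of $\kF_t$-stopping times $\si_1<\si_2<\cdots$, tending to infinity, at which $U_{\si_n}\in[x-\ep/2,x+\ep/2]$ while the height is in $[0,1]$; by the domination $W^*\le V+1$ the actual height $W^*_{\si_n}$ then also lies in $(0,1]$.

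Finally I would upgrade ``$U$ near $x$ with height in $(0,1]$, infinitely often'' to ``$W^*$ returns to $0$ with $U$ near $x$, infinitely often'', exactly as the excursion argument of Lemma \ref{irred} upgrades $\{(U,W)\in\{(u,0),(u,\pm1)\}\}$ to $\{(U,W)=(u,0)\}$. Since $W^*_{\si_n}\in(0,1]$, property \eqref{I'p} gives, conditionally on $\kF_{\si_n}$ and uniformly in $n$, probability at least $p_{\ep/2}>0$ that at the next return of $W^*$ to $0$ the horizontal displacement lies in $[-\ep/2,\ep/2]$, i.e. that $U$ lands within $\ep$ of $x$. As these conditional probabilities are bounded below by the constant $p_{\ep/2}$ along the stopping-time sequence $(\si_n)$, the conditional Borel--Cantelli lemma (Theorem 12.15 in \cite{W}) yields infinitely many successes, each of which is a return time $\tau_m$ with $U_{\tau_m}\in[x-\ep,x+\ep]$. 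Since $x$ and $\ep$ are arbitrary, $\{U_{\tau_n}\}$, and hence $\{S_n\}$, is recurrent.

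The step I expect to be the real obstacle is the anchoring recurrence of the second paragraph. Contrary to the discrete case, where $V$ is a Markov chain returning to $0$ along a discrete set of times and the embedded $U$-walk is literally a $P_0$-walk, the reflected diffusion $V$ meets $0$ on a set of Lebesgue measure zero, so one cannot simply intersect the positive-measure set ``$U$ near $x$'' with ``$V=0$''. The correct route is to isolate a genuine discrete renewal skeleton, namely the successive descents from level $1$ to $0$ of an all-$(\sigma_0,b_0)$ height process, whose embedded horizontal walk is exactly an i.i.d. $P_0$-walk; to transfer the resulting near-visits of $U$ to the actual height process through the domination $W^*\le V+1$; and to verify that the times $\si_n$ so produced are $\kF_t$-stopping times, so that \eqref{I'p} and the conditional Borel--Cantelli lemma apply verbatim. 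Once this skeleton is in place, the passage from exact equality (discrete) to $\ep$-neighbourhoods (continuous) throughout is routine.
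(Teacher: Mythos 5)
Your overall template is the right one --- the paper in fact gives no proof of Theorem \ref{pB} (it declares the argument ``similar to the discrete case and left to the reader''), and most of your steps are sound: the identification of $(U_{\tau_n},I_{\tau_n})$ with a process satisfying \eqref{loiconditionnelle} via Lemma \ref{lem32cont}, the use of Proposition \ref{hyptau} to get $\tau_\infty=\infty$ under \eqref{bshyp}, the propagation of $W^*_t\le V_t+1$ across excursions (each restarts at $1\le V_{\tau_n}+1$), and the final upgrade via \eqref{I'p} and conditional Borel--Cantelli. But the transfer step in your anchoring argument has a genuine gap. Your renewal skeleton --- ``successive descents from level $1$ to level $0$ of an all-$(\sigma_0,b_0)$ height process'' --- must be the process, call it $V'$, which restarts instantaneously at height $1$ each time it hits $0$: only for that process are consecutive descent completions separated by exactly one $P_0$ increment, so that the sampled $U$-positions form an i.i.d.\ $P_0$-walk. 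At its descent-completion times $\si_n$ you know $V'_{\si_n}=0$, but the domination you invoke, $W^*\le V+1$, involves the \emph{reflected} solution $V$ of \eqref{SDEV}, about whose value at $\si_n$ you know nothing; as written, $W^*_{\si_n}\in[0,1]$ does not follow. The alternative reading --- taking the skeleton from the descents of the reflected $V$ itself --- fails for the opposite reason: between descents $V$ must climb from $0$ back to $1$, so the embedded horizontal walk has step law $P_0$ convolved with the (symmetric) ascent displacement, and recurrence of $P_0$ does not imply recurrence of such a convolution (a Cauchy walk is recurrent, but ceases to be after convolution with a symmetric stable law of smaller index).

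The gap is fillable by one additional comparison lemma, which you should state and prove: driving $V$ and $V'$ by the same $B^{(2)}$, one has $V_t\le V'_t$ for all $t$. Indeed, by pathwise uniqueness for the Lipschitz pair $(\sigma_0,b_0)$, away from $0$ both solve the same equation, so their order can be destroyed only by merging, never by crossing; the local-time push on $V$ acts only when $V=0\le V'$; and the restarts of $V'$ are upward jumps. Then $W^*\le V+1\le V'+1$ gives $W^*_{\si_n}\in[0,1]$ at the skeleton times, where \eqref{I'p} applies with $w_0=W^*_{\si_n}\in(0,1]$ (if $W^*_{\si_n}=0$ then $\si_n$ is itself some $\tau_m$ and the visit is immediate). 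One further small point: the ``success'' event attached to $\si_n$ is measurable only at the first $\tau_m\ge\si_n$, which may exceed $\si_{n+1}$, so the conditional Borel--Cantelli application needs the same $\kB_n\cup\kB_{n+1}$-style bookkeeping as in Lemma \ref{irred} rather than a verbatim application along $(\si_n)$. With these two repairs your proposal is a correct completion of the argument the paper leaves to the reader; it is a missing lemma, not a wrong approach.
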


We state now an analogue of Theorem \ref{theotransient} which can give examples of transient processes.
We say that a process on $\R$ is transient, if for all $a<b$,
it returns a.s. a finite number of times in $[a,b]$. Similarly a law $P$
is transient if the associated random walk is transient.
\begin{theo}
Let $(\sigma_0,b_0)$ be a Lipschitz function and $((\sigma_i,b_i),1\le i\le k)$
be a sequence of locally Lipschitz functions. Assume \eqref{Tcontfini} holds for $i\in\{0\}\cup\N_k$. Set $P_i=\p(\sigma_i,b_i)$. Assume that there exists $0<\alpha<1<\beta$ and
positive constants $\sigma_+$ and $b_+$ such that \eqref{bshyp} holds.
Assume moreover that $P_0=\p(\sigma_0,b_0)$ is a transient law on $\R$ and that
for each $i\in \N_k$, $(\sigma_0,b_0)$ is successfully coupled with $(\sigma_i,b_i)$. Then for any $(S_0,(S_n,i(n))_{n\ge 1})$ which satisfies
\eqref{loiconditionnelle}, the process $\{S_n\}_{n \ge 0}$ is transient.
\end{theo}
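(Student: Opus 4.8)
The plan is to mirror the proof of Theorem~\ref{pB}, simply reversing the role of the two ingredients in the coupling and exploiting that, in the continuous setting, transience means $|S_n|\to\infty$ (equivalently, a.s.\ finitely many visits to every bounded interval). By the definition of successful coupling, ``$(\sigma_0,b_0)$ is successfully coupled with $(\sigma_i,b_i)$'' puts the transient ingredient $(\sigma_0,b_0)$ on the \emph{dominated} process: solving \eqref{SDE} with coefficients $(\sigma_0,b_0)$ and \eqref{SDEV} with coefficients $(\sigma_i,b_i)$ and $W_0\le V_0+1$ forces $W_t\le V_t+1$. So I would run the excursion construction of this subsection with the switching diffusion $\wt V$ governed by the $(\sigma_i,b_i)$, so that its horizontal trace at the return times $\tau_n$ produces the steps $P_i=\p(\sigma_i,b_i)$ of a process satisfying \eqref{loiconditionnelle}, while simultaneously carrying a companion diffusion $\wt W$ with fixed coefficients $(\sigma_0,b_0)$, driven by the same vertical Brownian motion $B^{(2)}$ and sharing the same horizontal Brownian motion $U=U_0+B^{(1)}$. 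Applying the comparison theorem (\cite{IW}, Theorem 1.1 p.437) on each excursion interval, exactly as in the verification that $(\sigma_0,b_0)$ is successfully coupled with $(\sigma_i,b_i)$, would then give the pathwise sandwich $\wt W_t\le \wt V_t+1$ for all $t$.

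Next I would reproduce the bookkeeping of Lemma~\ref{lem32cont} and Lemma~\ref{lem=enloi}. Since the companion coefficients are held fixed at $(\sigma_0,b_0)$, the horizontal positions of $\wt W$ at its successive descents to $0$ form a random walk with i.i.d.\ steps $P_0=\p(\sigma_0,b_0)$, whereas $\{U_{\tau_n}\}$ is, as in Lemma~\ref{lem=enloi}, equal in law to the process $\{S_n\}$ of \eqref{conditioni}. Because \eqref{bshyp} holds, Proposition~\ref{hyptau} gives $\tau_\infty=\infty$, so both processes are defined for all $n$. Transience of $P_0$ then yields, for the companion, that its horizontal trace visits every bounded interval only finitely often.

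To conclude I would argue by contradiction: suppose $\{U_{\tau_n}\}$ returns to some interval $[a,b]$ infinitely often. At each such $\tau_n$ we have $\wt V_{\tau_n}=0$, hence $\wt W_{\tau_n}\le 1$ by the sandwich; thus the companion diffusion hovers over $[a,b]$ at a height in $[0,1]$ infinitely often. From such a height it descends to $0$ after a time $\Delta$, during which the shared horizontal coordinate moves by $U_{\tau_n+\Delta}-U_{\tau_n}$, and one would show that each such $\tau_n$ is matched to a nearby companion descent time whose horizontal position lies within a bounded distance of $[a,b]$; this forces the companion trace into a slightly larger interval $[a',b']$ infinitely often, contradicting transience of $P_0$. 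The main obstacle is precisely this transfer. The successful coupling controls only the \emph{vertical} coordinate, up to the additive slack $+1$, so when the real walk sits at the axis the companion is merely near it; in Theorem~\ref{pB} this gap is bridged by the irreducibility hypothesis \eqref{I'p}, which is deliberately absent here. The resolution is that continuous transience is a purely \emph{horizontal} escape statement, for which a fixed vertical slack is harmless provided one controls the horizontal displacement $U_{\tau_n+\Delta}-U_{\tau_n}$ accrued while the companion falls from height $\le 1$ to $0$; making this tightness uniform is where \eqref{bshyp} must do the real work, and it is the step I expect to require the most care, since the descent passes through the region below $\alpha$ on which the coefficients are only locally Lipschitz.
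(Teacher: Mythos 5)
Your proposal takes exactly the route the paper intends. The authors give no written argument for this theorem (``The proof of these theorems are similar to the discrete case and left to the reader''), the only hint being the reversal of the coupling already noted after Theorem \ref{theotransient}: the transient ingredient $(\sigma_0,b_0)$ now occupies the \emph{dominated} slot. Your architecture --- the switching vertical process built from stopped excursions with coefficients $(\sigma_i,b_i)$ defining the $\tau_n$ and hence the walk; a companion diffusion with fixed coefficients $(\sigma_0,b_0)$, driven by the same $B^{(2)}$ and sharing the horizontal Brownian motion $U$, sandwiched by $\wt W_t\le \wt V_t+1$; the i.i.d.\ $P_0$ trace of the companion at its own zeros; $\tau_\infty=\infty$ via Proposition \ref{hyptau}; and a conditional Borel--Cantelli transfer --- is the intended proof, and your explanation of why no analogue of \eqref{I'p} is needed (one only has to land in a bounded window, not hit a point exactly) is precisely the right reading of the asymmetry.

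Two corrections on the step you flag as hardest. First, it is easier than you fear, and \eqref{bshyp} is \emph{not} the relevant tool there: \eqref{bshyp} enters only through Proposition \ref{hyptau}, which you already cite. The uniform tightness of the descent concerns the diffusion with coefficients $(\sigma_0,b_0)$, which is globally Lipschitz by hypothesis, so your worry about the region below $\alpha$ where coefficients are only locally Lipschitz is misplaced --- the descending process is the companion, not the switching walk. Concretely, by pathwise uniqueness two solutions of the $(\sigma_0,b_0)$ equation driven by the same $B^{(2)}$ cannot cross, so the hitting time $T_w$ of $0$ from $w\in(0,1]$ satisfies $T_w\le T_1$ pathwise, and \eqref{Tcontfini} gives $T_1<\infty$ a.s.; since $U$ is an independent Brownian motion, $\sup_{w\le 1}\pp\{|U_{T_w}|>L\}\le \pp\{T_1>M\}+\pp\{\sup_{t\le M}|U_t|>L\}$ can be made at most $1-\delta<1$ for suitable $M,L$. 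Applying the conditional Borel--Cantelli lemma (Theorem 12.15 in \cite{W}) along a well-separated subsequence of the times $\tau_n$ with $U_{\tau_n}\in[a,b]$ (chosen so that each trial's descent completes before the next trial starts) forces the companion's embedded $P_0$-walk into $[a-L,b+L]$ infinitely often, directly contradicting the definition of a transient law --- no asymptotic escape statement $|U_{\sigma_m}|\to\infty$ is even needed. Second, a check you should not skip: the successful-coupling definition compares a \emph{stopped} solution of \eqref{SDE} with a \emph{reflected} solution of \eqref{SDEV}, whereas in your construction both processes are concatenations of stopped excursions with upward jumps (the walk restarts at $1$ at each $\tau_n$, the companion restarts at $1$ at each of its zeros). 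The domination does survive these restarts --- an upward jump of the dominating process is harmless, and the companion's restart to $1$ preserves $\wt W\le \wt V+1$ because $\wt V\ge 0$ --- but this is exactly where your phrase ``apply the comparison theorem on each excursion interval'' must be made explicit, since on the restart intervals neither process carries the local time term of \eqref{SDEV}.
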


\noindent The proof of these theorems are similar to the discrete case and left to the reader.

\medskip
\noindent As an example of laws which are successfully coupled we give the
following result:
\begin{prop}
\label{continuousex}
Let $P_0$ be the Cauchy law on $\R$. Set $(\sigma_0,b_0)=(1,0)$. Then $(\sigma_0,b_0)$ satisfies \eqref{Tcontfini}, is successfully coupled with itself, and $P_0=\p(\sigma_0,b_0)$.  Moreover for any $\alpha \in [1,2]$, there exists $(\sigma_\alpha,b_\alpha)$ locally Lipschitz satisfying \eqref{Tcontfini}, successfully coupled with $(\sigma_0,b_0)$ and such that $\p(\sigma_\alpha,b_\alpha)$ is in
the domain of normal attraction of a symmetric stable law with index $\alpha$.
\end{prop}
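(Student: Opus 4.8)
The plan is to reduce everything to the tail of the hitting time $T$, exploiting the independence of the two driving Brownian motions. Since $U_t=U_0+B^{(1)}_t$ while $T$ depends only on $B^{(2)}$, conditionally on $T$ the variable $U_T$ is centered Gaussian with variance $T$; writing $Z$ for a standard normal variable independent of $T$, one has the variance-mixture representation $U_T\overset{d}{=}\sqrt{T}\,Z$, and hence
\beq
\E[e^{i\xi U_T}]=\E[e^{-\xi^2T/2}]=\kL_T(\xi^2/2),
\eeq
where $\kL_T$ denotes the Laplace transform of $T$. Thus it suffices, for each prescribed index, to exhibit a one-dimensional diffusion whose hitting time of $0$ from $1$ has the correct tail.

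For $(\si_0,b_0)=(1,0)$ the process $W$ is a Brownian motion started at $1$, so $T<\infty$ a.s. (which is \eqref{Tcontfini}) and $\kL_T(\la)=e^{-\sqrt{2\la}}$; therefore $\E[e^{i\xi U_T}]=e^{-|\xi|}$, the characteristic function of the standard Cauchy law, so $P_0=\p(\si_0,b_0)$ is Cauchy, and successful coupling with itself is immediate from the comparison criterion recalled earlier. For $1\le\al<2$ I would take $W$ to be a Bessel process of dimension $\de=2-\al\in(0,1]$, i.e.
\beq
\si_\al(w)=1,\qquad b_\al(w)=\frac{1-\al}{2w}\quad(w>0).
\eeq
These are locally Lipschitz on $(0,\infty)$, and since $\al\ge1$ the drift is nonpositive, so $\si_\al(v+1)=1=\si_0(v)$ and $b_\al(v+1)\le0=b_0(v)$; by the comparison criterion $(\si_\al,b_\al)$ is then successfully coupled with $(\si_0,b_0)$. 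As $\de<2$ the origin is accessible and $W$ does not explode, so \eqref{Tcontfini} holds. Note that $\al=1$ recovers $(1,0)$, consistently with the Cauchy case.

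The crux is the tail of $T$. Passing to the squared Bessel process $Y=W^2$ of dimension $\de$ started at $1$, the first hitting time of $0$ has the classical representation $T\overset{d}{=}1/(2\gamma_{\al/2})$, with $\gamma_{\al/2}$ a Gamma variable of parameter $1-\de/2=\al/2$; estimating $\pp\{\gamma_{\al/2}<\ep\}$ as $\ep\to0$ gives $\pp\{T>t\}\sim C_\al\,t^{-\al/2}$. Since $\al/2\in[1/2,1)$, a Tauberian theorem converts this into $1-\kL_T(\la)\sim C_\al\Gamma(1-\al/2)\,\la^{\al/2}$, whence
\beq
\E[e^{i\xi U_T}]=1-c_\al|\xi|^{\al}+o(|\xi|^\al)\qquad(\xi\to0),
\eeq
which is exactly the characteristic-function criterion for $\p(\si_\al,b_\al)$ to lie in the domain of normal attraction of the symmetric $\al$-stable law (equivalently, Breiman's lemma applied to $|U_T|=\sqrt{T}\,|Z|$ yields symmetric exact power tails $\pp\{|U_T|>x\}\sim c_\al x^{-\al}$). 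For the endpoint $\al=2$ the choice $\de=0$ fails, since it gives $\pp\{|U_T|>x\}\sim c/x^2$, an infinite-variance law lying only in the broad domain of attraction of the Gaussian; instead I would take $(\si_2,b_2)=(1,-\mu)$ with $\mu>0$, so that $T$ has exponential moments and $\E[U_T^2]=\E[T]=1/\mu<\infty$, placing the nondegenerate symmetric law $\p(\si_2,b_2)$ in the domain of normal attraction of the Gaussian (the symmetric $2$-stable law); coupling and \eqref{Tcontfini} are again immediate.

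The one genuinely delicate step is the tail asymptotics $\pp\{T>t\}\sim C_\al t^{-\al/2}$; the coupling verifications, the accessibility of $0$, and the Cauchy computation are all routine. I would obtain this tail from the exact law of the hitting time of $0$ for a squared Bessel process of dimension in $(0,2)$ (the Gamma representation, available through \cite{RY}), or equivalently from the small-$\la$ expansion of $\kL_T$ via modified Bessel functions together with a Tauberian theorem. The secondary subtlety is the endpoint $\al=2$, where one must step outside the Bessel family to secure genuine finiteness of the variance, and hence normal rather than merely ordinary attraction.
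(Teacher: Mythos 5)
Your proof is correct, and for $\alpha\in[1,2)$ it is essentially the paper's construction: the same subordination identity $\E[e^{i\xi U_T}]=\E[e^{-\xi^2T/2}]$, and the identical diffusion family, since your $b_\alpha(w)=(1-\alpha)/(2w)$ is exactly the paper's $b^{(\nu)}(w)=(\nu+1/2)/w$ with $\nu=-\alpha/2\in(-1,-1/2)$, with the same coupling via the comparison theorem of \cite{IW}. You diverge from the paper in two places, both legitimately. First, for the tail of $T$: the paper computes the Laplace transform $\phi_\nu$ from Kent's formula \cite{K} in terms of the modified Bessel functions $K_\nu$ and $I_{\pm\nu}$ and expands the series (using \cite{L}) to get $\E[e^{iuU_T}]=1-cu^{-2\nu}+o(u^{-2\nu})$ directly; you instead invoke Getoor's representation $T\overset{d}{=}1/(2\gamma_{\alpha/2})$ for the Bessel hitting time (available as an exercise in \cite{RY}, though you should cite it explicitly since the paper's route avoids it) and then apply Karamata's Tauberian theorem, or Breiman's lemma to $\sqrt{T}\,|Z|$. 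Your route is more probabilistic and yields exact power tails $\pp\{|U_T|>x\}\sim c_\alpha x^{-\alpha}$, which together with symmetry gives the cleanest justification of \emph{normal} attraction; the paper's route is purely analytic and stops at the characteristic-function expansion. Second, at the endpoint $\alpha=2$ the paper takes the degenerate pair $(\sigma,b)=(0,-1)$, so that $W_t=1-t$, $T=1$ is deterministic and $U_T$ is \emph{exactly} standard Gaussian, whereas you take $(1,-\mu)$, giving an inverse-Gaussian $T$ with $\E[T]=1/\mu<\infty$ and hence a symmetric finite-variance law; both land in the domain of normal attraction of the Gaussian. Your choice has a small structural advantage: it satisfies the stated comparison criterion $\sigma(v+1)=\sigma_0(v)$, $b(v+1)\le b_0(v)$ verbatim, while the paper's $(0,-1)$ has $\sigma\ne\sigma_0$ and its coupling is only immediate from starting heights $W_0\le 1$ (which is all the excursion construction uses). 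Your side remark that staying in the Bessel family at $\delta=0$ produces tails $\sim c/x^2$, hence only broad (not normal) attraction to the Gaussian, correctly identifies why a separate endpoint construction is needed at $\alpha=2$; the paper makes the same escape from the Bessel family without comment.
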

\begin{proof} The fact that $(\sigma_0,b_0)$ satisfies \eqref{Tcontfini} and is successfully coupled with itself is immediate (in the coupling we have $V_t=V_0+B^{(2)}_t+L_t$ and $W_t=W_0+B^{(2)}_t$ for $t<T$).
So let us concentrate on the second claim. The case $\alpha=1$ is given for
instance by $P_0$ itself. Now we prove the result for $\alpha = 2$. Take $(\sigma,b)=(0,-1)$ to be constants.
Then $W_t=W_0-t$ for all $t\le T=W_0$.  Set $P=\p(\sigma,b)=\p(0,-1)$, \eqref{Tcontfini} being obviously satisfied.
Let $U$ be a standard Brownian motion on $\R$. Observe that when $W_0=1$, then $W$ reaches $0$ at time $T=1$. Thus $P$ is the law of
$U$ at time $1$ which is the standard Gaussian and it is immediate that
$(\sigma,b)=(0,-1)$ is successfully coupled with $(\sigma_0,b_0)=(1,0)$. This gives the result for $\alpha=2$.
It remains to prove the claim for $\alpha \in (1,2)$. For $\nu \in (-1,-1/2)$,
let $W^{(\nu)}$ be a Bessel process of index $\nu$ starting from $1$, i.e.,
$W^{(\nu)}$ is the solution of the SDE:
$$
W^{(\nu)}_t=1+B^{(2)}_t+(\nu+1/2)\int_0^t\frac{1}{W^{(\nu)}_s}\ d s \text{ for all } t<T,
$$
where $B^{(2)}$ is a Brownian motion and $T$ is as always the first time when $W$
reaches $0$. It is known (see \cite{RY}) that $T$ is a.s. finite when
$\nu \in (-1,-1/2)$.
Set $\sigma^{(\nu)}=1$ and $b^{(\nu)}(w)=(\nu+1/2)/w$. Then, for $\nu \in (-1,-1/2)$,  $(\sigma^{(\nu)},b^{(\nu)})$ satisfies \eqref{Tcontfini}. Set $P^{(\nu)}=\p(\sigma^{(\nu)},b^{(\nu)})$.
We claim that if $\nu \in (-1,-1/2)$, then $(\sigma^{(\nu)},b^{(\nu)})$ can be successfully
coupled with $(\sigma_0,b_0)$ and $P^{(\nu)}$ is in the domain of attraction of a stable law
with index $-2\nu$. The first part is immediate: since $\nu+1/2\le 0$, it
follows from a comparison theorem (see \cite{IW} Theorem 1.1 p.437). For
the second part, first observe that
$$
\E\left\{e^{i u U_T}\right\}=\E\left\{e^{-\frac {u^2}2 T}\right\} \quad \text{ for all } u\in\R.
$$
So the characteristic function of $P^{(\nu)}$ is related to the Laplace
transform of $T$. For Bessel processes this last function can be expressed
in terms of modified Bessel functions: if $\phi_\nu$ is the Laplace
transform of $T$, the hitting time of $0$ for a Bessel process of index
$\nu < -1/2$ starting from $1$, then (see \cite{K} Theorem 3.1):
$$
\phi_\nu(s) = \frac{2^{\nu+1}}{\Gamma(-\nu)}\frac{K_\nu(\sqrt{2s})}{(2s)^{\nu/2}}\quad \text{ for all } s>0,
$$
where $\Gamma$ is the usual Gamma function and $K_\nu$ is a modified Bessel function (to see this from \cite{K}, take $a=1$ and let $b$ tend to $0$ in Formula $(3.7)$, and use the asymptotic when $x\to 0$ of $K_\nu(x)$ given just above Theorem 3.1). Moreover (see \cite{L} Formula (5.7.1) and (5.7.2)) we have
$$
K_\nu(s)=\frac{\pi}{2}\frac{I_{-\nu}(s)-I_\nu(s)}{\sin\nu\pi} \quad \text{ for all } s>0,
$$
where
$$
I_\nu(s)=\sum_{k=0}^\infty \frac{(s/2)^{\nu+2k}}{k!\Gamma(k+\nu+1)} \quad \text{ for all } s>0.
$$
This shows (use also basic identities of the Gamma function given in Formula (1.2.1) and (1.2.2) in \cite{L}) that for $u$ close to $0$,
$$\E\left\{e^{iuU_T}\right\}=1-cu^{-2\nu} + o(u^{-2\nu}),$$
for some constant $c>0$, which proves our claim.
\end{proof}


\section{A weak law of large numbers} The next result answers the
second part of Benjamini's original question:
\begin{theo}
Let $(S_n,n\ge 0)$ be the process on $\Z$ starting from $0$, which at a first
visit to a site makes a discrete symmetric Cauchy jump and at other visits
makes a $\pm 1$ Bernoulli jump. Then
\beq
\frac 1n \sup_{t \le n} \mid S_t \mid  \to 0 \text{ in probability}.
\label{W12}
\eeq
\end{theo}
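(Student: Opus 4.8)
The plan is to bound $\sup_{t\le n}|S_t|$ by a martingale part that is $o(n)$ after dividing by $n$ plus a Cauchy part controlled by the range, thereby reducing \eqref{W12} to sublinear growth of the range. Concretely, I would build $\{S_n\}$ by the ``supply'' construction of the introduction, with i.i.d.\ variables $\{Y(2,j)\}_{j\ge1}$ of law $P_2$ (the discrete Cauchy law) used in order for the type-$2$ steps, and $\pm1$ variables for the type-$1$ steps. Let $R_n$ denote the number of distinct sites visited before time $n$; since a site produces a $P_2$-step only at its first visit, $R_n$ equals the number of $t<n$ with $i(t+1)=2$. Then $S_n-S_0=A_n+B_n$, where $A_n$ is the sum of the (at most $n$) type-$1$ increments and $B_n=\sum_{j=1}^{R_n}Y(2,j)$.

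Next I would estimate the two parts. Since each type-$1$ increment is conditionally symmetric and bounded by $1$, $\{A_n\}$ is a martingale with at most $n$ nonzero increments, so Doob's $L^2$-inequality gives $E\{\sup_{t\le n}A_t^2\}\le 4n$ and $\sup_{t\le n}|A_t|=O_P(\sqrt n)=o_P(n)$. For the second part, $\sum_{j\le k}Y(2,j)$ is a random walk in the domain of normal attraction of the Cauchy law, so for deterministic $K$ one has $K^{-1}\max_{j\le K}\big|\sum_{i\le j}Y(2,i)\big|\Rightarrow \sup_{s\le1}|\kC_s|$, the a.s.\ finite supremum of a Cauchy process. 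Splitting on $\{R_n\le\de n\}$, using this convergence, and then letting $\de\to0$ shows that $n^{-1}\sup_{t\le n}|B_t|\to0$ in probability as soon as $R_n=o_P(n)$. Hence $\sup_{t\le n}|S_t|\le\sup_{t\le n}|A_t|+\sup_{t\le n}|B_t|$ yields \eqref{W12}, and the whole statement reduces to $R_n=o_P(n)$. (This reduction is sharp: as $S_0=0$ lies in the range, $R_n\le 2\sup_{t\le n}|S_t|+1$.)

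The heart of the proof, and the step I expect to be the main obstacle, is therefore $R_n=o_P(n)$. By Markov's inequality it suffices to prove $E\{R_n\}=o(n)$, i.e.\ $\frac1n\sum_{t<n}\pp\{S_t\text{ is fresh}\}\to0$, and for this it is enough to show $\pp\{S_t\text{ is fresh}\}\to0$. A fresh visit at time $t$ can occur only if either a $\pm1$ step carried the walk from the frontier of its current range onto an adjacent empty site, or a $P_2$-jump issued from a fresh site landed on an empty site. I would bound the first by showing that the recurrent $\pm1$ dynamics spends a negligible fraction of its time at the frontier of the visited set (the occupation of an interval by a diffusive walk concentrates in the interior), and the second by exploiting that $P_2$, being the horizontal displacement of planar simple random walk between successive visits to the axis, repeatedly returns the walk into the already-visited bulk, so that runs of consecutive fresh $P_2$-landings stay short.

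The genuinely delicate point is to make these two estimates uniform over the random and growing environment of visited sites; in particular one must rule out that the heavy tail of $P_2$ lets the walk create new sites fast enough to keep $\pp\{S_t\text{ is fresh}\}$ away from $0$. I would attack this through the planar coupling of Example \ref{ex1}, writing $S_t=U_{\tau_t}$ with $(U,V)$ a simple random walk on $\Z^2$ and $\tau_n=(n-R_n)+\sum_{j\le R_n}\xi_j$, the $\xi_j$ being the i.i.d.\ lengths of the vertical excursions used by the type-$2$ steps. This time-change is what converts ``too many fresh sites'' into ``anomalously large elapsed planar time'': a linear range would force $\tau_n$ of order $n^2$ and hence a linear horizontal spread of $(U,V)$, which the frontier-occupation estimate excludes. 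Turning this heuristic into a quantitative decay of $\pp\{S_t\text{ is fresh}\}$, uniformly in the configuration of visited sites, is the main work.
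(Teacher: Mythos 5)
Your first half coincides with the paper's: the supply-construction decomposition $\wt S_n=\sum_{\l\le n-R_n}\wt Y(1,\l)+\sum_{\l\le R_n}Y(2,\l)$, maximal inequalities for the bounded-step martingale part and for the Cauchy-attracted sums, and the reduction of \eqref{W12} to $R_n=o_P(n)$ are exactly the paper's steps \eqref{W3}--\eqref{t2}. But at the step you yourself call ``the main work'' there is a genuine gap, and the heuristic you offer to fill it does not close. First, your contradiction argument is circular: if $R_n$ were of order $n$, then indeed $\tau_n$ would be of order $n^2$ (the excursion durations have tail $\sim t^{-1/2}$), but the horizontal spread of planar simple random walk over $n^2$ steps is \emph{typically} of order $n$ --- linear spread of $S_n$ is precisely the generic behavior of a Cauchy-type walk, and no ``frontier-occupation'' estimate for $(U,V)$ rules it out; what must be shown is that the walk \emph{revisits} old sites often enough, not that it stays in a sublinear window. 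Second, your route to $E R_n=o(n)$ counts range points at first visits and asks for $\pp\{S_t\text{ fresh}\}\to 0$ directly, uniformly over the random visited set; you give no mechanism for this uniformity. The paper's resolution is different and is the crux: it counts range points at their \emph{last} visit, $E\wt R_n=\sum_{t<n}\pp\{A_{t,n-t}\}$ with $A_{t,\l}$ the event that $\wt S_t$ is not revisited during $[t+1,t+\l-1]$, so that it suffices to prove $\pp\{A_{t,\l}\}\to 0$ as $\l\to\infty$ uniformly in $t$ --- a uniform non-return estimate. The uniformity over the past is handled by the $\Th$-modified walk (all points of an arbitrary set $\Th$ declared old), giving $\pp\{A_{t,\l}\}\le\sup_\Th\pp^\Th\{\wt S_q\ne 0,\,1\le q\le\l-1\}$, and then the planar coupling supplies a bound independent of $\Th$: the event $\kA_q=\{U_q=V_q=0\}\cap\{V_{q+1}=-1\}$ depends only on the simple random walk $(U,V)$, occurs infinitely often by the conditional Borel--Cantelli lemma, and via $|W_n|\le|V_n|+1$ together with the claim \eqref{jump2} forces the imbedded walk to sit at the origin at some $\tau_r\in\{q,q+1\}$, whatever $\Th$ and the index process do. None of this apparatus --- last-visit counting, the $\Th$-modification, or the environment-free event $\kA_q$ --- appears in your proposal.

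A secondary but real flaw: you write $S_t=U_{\tau_t}$ with the type-$1$ steps being $\pm 1$ Bernoulli, but the coupling of Example \ref{ex1} cannot embed that process --- the paper notes explicitly that no $Q_1$ with $\p(Q_1)=P_1$ successfully coupled with $Q_0$ exists, because the $U$-coordinate of planar simple random walk stands still half the time. One must run the whole argument for the lazy walk $\wt S$ using $\wt P_1\{\pm1\}=1/4$, $\wt P_1\{0\}=1/2$, and then transfer \eqref{W12} back to $S$ via the time change $\rho_\l$ of Lemma \ref{lemP0}, whose increments have exponential tails by \eqref{Stau}, so that $\rho_t/t$ stays bounded a.s. This de-lazification is a necessary extra step your outline omits.
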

\begin{proof}
We shall first prove \eqref{W12} with $S_t$ replaced by the auxiliary process
$\wt S_t$ which makes a discrete symmetric Cauchy jump at
a first visit to a site and at other visits makes a jump with distribution $\wt P_1$, where
\begin{eqnarray*}
\wt P_1\{\pm1\} = 1/4,\ \wt P_1\{0\}=1/2 \text{ and }\wt P_1\{u\} = 0
\text{ for } u \notin \{-1,0,+1\}.
\end{eqnarray*}
Quantities referring to the walk $\{\wt S_n\}$ will all be
decorated with a tilde, but will otherwise be defined in the same
way as their analogues without a tilde.
We further remind the reader that $P_1$ is
the distribution on $\Z$ which puts mass $1/2$ on $\pm 1$ and that
$P_2$ is the discrete Cauchy distribution.

Let $\wt R_n$ be the range at time $n$, i.e.,
\begin{equation} \label{wtR}
\wt R_n = \text{ cardinality of } \{\wt S_0,\wt S_1,\dots,\wt S_{n-1}\}.
\end{equation}
This means that during the time interval $[0,n]$, $ \wt S_\l$ took
exactly $\wt R_n$ Cauchy jumps and $n-\wt R_n$ steps with distribution $\wt P_1$.
Let us now use the construction of the $\{\wt S_\l\}$ which is the
analogue of the one given for $\{S_\l\}$ in the introduction. More precisely, let $(\wt Y(1,\l))_{\l\ge 1}$ and $(\wt Y(2,\l))_{\l\ge 1}$ be independent sequences of independent random variables respectively of law $\wt P_1$ and $P_2$, then $(\wt S_n)_{n\ge 1}$ is such that $\wt S_0=0$ and for $n\ge 1$,
\begin{eqnarray*}
\wt S_n = \sum_{\l=1}^{n-\wt R_n} \wt Y(1,\l) + \sum_{\l =1}^{\wt R_n}
Y(2,\l).
\end{eqnarray*}
with $\wt R_n$ defined by \eqref{wtR}.

Consequently, for any $\ep > 0$ it holds that on the
event $\{\wt R_n \le \ep n\}$,
\begin{eqnarray*}
\sup_{t \le n} |\wt S_t| \le \sup_{s \le n} \Big|\sum_{\l =1}^s \wt Y(1,\l)\Big| + \sup_{r \le\ep n}\Big|\sum_{\l
=1}^r  Y(2,\l)\Big|.
\end{eqnarray*}
By maximal inequalities
(see \cite{Bi}, Theorem 22.5) we therefore have for any $\ep \le
1, \al
> 0$,
\begin{eqnarray}
\label{W3}
\nonumber \pp\{\sup_{t \le n} |\wt S_t| \ge 8 \al n\} &\le & \pp\{\wt R_n >\ep n\}
+  4\max_{t\le n}\pp\left\{\Big|\sum_{\l =1}^t \wt Y(1,\l)\Big| \ge \al n\right\} \\
   &+ & 4\max_{t\le \ep n} \pp\left\{\Big|\sum_{\l =1}^t Y(2,\l)\Big|\ge \al n\right\}.
\end{eqnarray}
Now, as is well known (eg. by Chebyshev's inequality), for each fixed $\al> 0$, \beq \label{W4} \max_{t \le
n}\pp\left\{\Big|\sum_{\l =1}^t \wt Y(1,\l)\Big| \ge  \al n\right\} \to 0 \text{ as } n \to \infty. \eeq Also, for
fixed $\al
> 0, \ep > 0, t \le \ep n$,
\beq \pp\left\{\Big|\sum_{\l=1}^t Y(2,\l)\Big| \ge \al n\right\} \le \pp\left\{ \Big|\sum_{\l=1}^t  Y(2,\l)\Big|
\ge \frac \al\ep t\right\}, \eeq and
 \beq \label{W5} \lim_{t \to \infty} \pp\left\{\Big|\sum_{\l
=1}^t   Y(2,\l)\Big| \ge \frac \al \ep t\right\} = f(\frac \al \ep), \eeq for some function $f(\cdot)$. Moreover,
$f(\al/ \ep)$ can be made as small as desired by taking $\al/\ep$ large. In fact,
$$
\frac 1m \sum_{\l=1}^m Y(2,\l) \text{ converges in distribution to a
Cauchy variable},
$$
as $m \to \infty$ (see \cite{F}, Theorem 17.7). It is immediate
from \eqref{W3}-\eqref{W5} that
\beq
\frac 1n \wt R_n \to 0 \text{ in
probability } \label{t2}
\eeq
is a sufficient condition for \eqref{W12} with $S_t$ replaced by $\wt S_t$.

We now turn to a proof of \eqref{t2}. Since $0 \le \wt R_n/n \le 1$,
\eqref{t2} is equivalent to
$$
\frac 1n \E[\wt R_n] = \frac 1n \sum_{t=0}^{n-1} \pp\left\{A_{t,n-t}\right\}
\to  0,
$$
where
\begin{eqnarray*}
A_{t,\l} &= \{\wt S_t \text{ is not revisited during
}[t+1,t+\l-1]\}\\
&=\{\wt S_t \ne \wt S_{t+s} \text{ for } 1 \le s \le \l-1\}.
\end{eqnarray*}
In particular, since $A_{t,\l}$ is decreasing in $\l$,
a sufficient condition for the WLLN for $\wt S_n$ is that
\beq
\lim_{\l \to \infty} \pp\left\{A_{t,\l}\right\} =0 \text{ uniformly in } t.
\label{t3}
\eeq
Recurrence essentially is property \eqref{t3}, {\it without the uniformity requirement}. To prove \eqref{t3} with
the uniformity we use the coupling defined in the remark below Example \ref{ex1}, as we now explain. Let $Q_0$ be
the transition probability matrix of a simple random walk on $\Z^2$. Denote this walk by $\{(U_n,V_n)\}_{n \ge 0}$
and let its starting point be $(0,0)$.
We proved in Example \ref{ex1} that $Q_0$ is successfully coupled with
itself. We shall use a part of that result here. We also need to
know that there exists another process $\{(U_n,W_n)\}_{n
\ge 0}$ which also starts at $(0,0)$ and takes values in $\Z^2$ and
in addition a coupled process $\{(U_n,V_n,W_n)\}_{n \ge 0}$ such that
\begin{eqnarray*}
\begin{array}{l}
\text{the law of the imbedded process of $\{(U_n,W_n)\}$ in
the}\\
\text{$U$-axis is the same as the law of Benjamini's process } \{\wt S_n\},
\end{array}
\end{eqnarray*}
and
\beq
\mid W_n \mid \le \mid V_n\mid +1.
\label{wc2}
\eeq
We remind the reader that the imbedded process here is $\{U_{\tau_n}\}_{n
\ge 0}$, where $\tau_0=0$ and for $\l \ge 1$
$$
\tau(\l) = \inf\{t > \tau(\l-1): W_t = 0\}.
$$
In the remainder of this proof we shall often write $\Ga(\phi)$
instead of $\Ga_\phi$ for certain $\Ga$ and $\phi$, in order to avoid double subscripts.
Now fix some $t \in \{0,1,\dots,n-1\}$. For time running from 0 to
$t$ we let $U^t_0,U^t_1,\dots,U^t_t$ be a copy of $\{\wt
S_\l\}_{0\le \l\le t}$. No coupling of this process with another process is
needed. However, we shall further need an independent copy of the
variables
$\{(U_n,V_n,W_n)\}_{n \ge 0}$ with its corresponding sequence of
times $\tau_\l$ at which the walk $\{(U_n,W_n)\}$ visits the $U$-axis. The
successive positions of Benjamini's walk determined by the triple
$\{(U_n,V_n,W_n)\}_{n \ge 0}$ itself would be $U(\tau_0),
U(\tau_1),\dots$. However we want to shift those positions to come
right after the previous points $\{U_\l^t\}$. This requires one
important change. In the coupling construction by itself,
at a time $\tau$ at which $W_\tau = 0$, assume that the Benjamini walk
arrived in some point,
$u$ say, on the $U$-axis. In order to choose  the next step for the walk
one must now decide whether the visit to $u$ at $\tau$ is the first
visit by the walk to $u$ or not. In the construction of
Example \ref{ex1} it would be  a first visit if and only if  $U(\tau_m) \ne u$ for
$0 \le \tau_m < \tau$. Here we have to modify this. We think of the
walk as first traversing $U^t_0,U^t_1,\dots,U^t_t$, and then
to start from time $t$ on to use the coupling construction. The
visit at time $\tau$ to $u$ will therefore be counted as the first
visit if and only if
$$
U(\tau_m) \ne u-U_t^t \text{ for $0 \le \tau_m < \tau$ and
$U_s^t \ne u$ for } 0 \le s \le \tau.
$$
After this change, the path
$$
U^t_0, U^t_1,\dots, U^t_t = U^t_t+ U(\tau_0), U^t_t+ U(\tau_1),
U^t_t +U(\tau_2),\dots
$$
is a typical path of a Benjamini walk, but with a modified rule
for determining whether a point is fresh or old. To be more precise, let
$\Th = \Th(t)= \{U^t_0 = 0, U^t_1,\dots,U^t_t\}$ be the set of
points visited by the Benjamini walk during $[0,t]$. Now first fix $\Th(t)$. Then
$A_{t,\l}$ occurs if and only if none
of the next $\l-1$ positions of a Benjamini walk equals $U^t_t$.
However, for this second stage the points of $\Th$ are regarded as old
points, even if they have not been visited before. Thus we use a
modified Benjamini walk in which the walk takes a simple symmetric walk
step when it is at an old point or a point from $\Th$, and a
discrete Cauchy distribution when the walk is at a fresh point
outside $\Th$. We shall call this the $\Th$-{\it modified walk}. The
original Benjamini walk is the special case of this when $\Th =
\emptyset$. When the dependence on $\Th$ is important we shall
indicate this by a superscript $\Th$. In particular, the law of
the walk which we just described (in which we regard the points of
$\Th$ as old points) is written as $\pp ^\Th$.
Choosing or modifying $\Th$ merely modifies the rule by which the
index $i$, or equivalently the function $F$ in \eqref{3.18} and \eqref{index} is
chosen. However, Lemma \ref{lem3.2} remains valid for the $\Th$-modified
process. In particular, we can express the conditional law of $\wt
S_{t+s} - \wt S_t$ given $\kF_t$, by means of $\pp ^{\Th(t)}$. This
gives
\begin{eqnarray*}
\pp \{A_{t,\l}\} &=& \E\big\{\pp\{\wt S_{t+q}\neq\wt S_t \text{ for all } 1 \le q \le \l-1 \big| \kF_t\}\big\}\\
&=& \E \big\{\pp ^{\Th(t)}\{ \wt S_q \ne 0
\text{ for all } 1 \le q \le \l-1\}\big\}\\
& \le & \sup_\Th \pp ^\Th
\{\wt S_q \ne 0 \text{ for all } 1 \le q \le \l-1\}.
\end{eqnarray*}

We now complete the proof of \eqref{t3}.
We find it useful for this purpose to introduce the events
\begin{eqnarray*}
\kA_q := \{U_q=V_q=0\} \cap \{V_{q+1} = -1\}.
\end{eqnarray*}
Since $\{(U_q,V_q)\}_{q \ge 0}$ is a simple random walk it is well known that
this walk is recurrent, so that with
probability 1, the event $\{U_q=V_q=0\}$ occurs for infinitely many $q$.
By a straightforward application of a
conditional version of the Borel-Cantelli lemma (cf. Theorem 12.15 in \cite {W})
it then follows that, again with
probability 1, $\kA_q$ occurs infinitely often. For every $q$ with $V_q = 0$
we have $W_q \in \{-1,0,1\}$, by
virtue of \eqref{wc2}. The remark following Example \ref{ex1} now shows
that if $\kA_q$ occurs for some $q$, then also
\begin{eqnarray*}
\kB_q := \left\{ U(q)=W(q)=0 \right\}\cup \left\{ U(q+1)=W(q+1)=0 \right\}
\end{eqnarray*}
occurs for the same $q$.
Note that the event 
$$\{\kB_q \text{ fails for all } 1\le q \le \l-1\}$$
coincides with the event
$$\cap_{q=1}^\l \{U(q)\neq 0 \hbox{ or }W(q)\neq 0\}.$$
Note now that (since $\tau_\l\ge \l$ and since $W(q)=0$ implies $q=\tau_r$ for some $r$) the event $\cap_{q=1}^\l \{U(q)\neq 0 \hbox{ or }W(q)\neq 0\}$ occurs when the event $\{\wt{S}_t=U(\tau_t)\ne 0 \hbox{ for all } 1\le t\le \l\}$.
Thus we have
\begin{eqnarray*}
\pp ^\Th\{\wt S_q \ne 0 \text{ for } 1 \le q \le \l\}
&\le & \pp ^\Th \{\kB_q \text{ fails for all }1\le q \le \l-1\}\\
&\le & \pp \{\kA_q \text{ fails for all }1\le q \le\l-1\}
\end{eqnarray*}
(use contrapositives for the last inequality).
But the right hand side here is independent of $t$ and $\Th$,
since it involves only the simple random walk $(U_n,V_n)$.
In addition this right hand side tends to 0 as $\l \to \infty$, since we already proved
that with probability 1 infinitely many $\kA_q$ occur. This last
estimate is uniform in $t,\Th$, as desired.

\comment{
has been reduced to showing that $\kA_q$ implies
$\kB_q$, which we give now.

If $V_q = 0$, then also $\mid W_q  \mid \le 1$ , by virtue of \eqref{wc2}.
Thus, if $\kA_q$ occurs for some $q$, then also $U_q =
0, W_q \in \{-1, 0, +1\}$ occurs.
If $W_q= 0$, then $q$ also equals $\tau_r$ for some $r$
(by the definition of the $\tau$'s), so that
$U_q = U(\rho_r) = W_q = W(\tau_r)=0$ occurs.
This leaves us with the case that $\kA_q \cap \{W_q = \pm 1\}$ occurs.
Assume next that $\kA_q \cap \{W_q =1\}$ occurs.
By the definition
of $\kA_q$ and of the successful coupling of $Q_0$ with itself, we
then also have $W_{q+1} - W_q = V_{q+1} -V_q
= -1$, and thus $W_{q+1}= 1 -1 = 0$. Finally, $U_{q+1}=U_q=0$, since
the coupling in Example \ref{ex1}
 restricts the change in $(U,V)$ to a horizontal or vertical step of
one unit.
As a few lines back this gives us that $q+1$ equals some $\rho_r$
and $U(\tau_r) = W(\tau_r) =0$ as well as $\kB_q$ occur.
Finally assume that $\kA_q \cap \{W_q = -1\}$ occurs.
Again we see that then $U_q=U_{q+1} = 0$, $V_{q+1}-V_q
=-1$, $W_{q+1} = W_q - (V_{q+1}-V_q) = 0$ (because $W_q=-1$).
Consequently $W_{q+1}=0 =W(\tau_r)$ for some $\tau_r$.
We have therefore proved that in all cases $\kA_q \subset
\kB_q$, as desired.
}

This finally proves \eqref{t3} and the WLLN, i.e., \eqref{W12} with $S_t$
replaced by $\wt S_t$. However, this proof is
for the $\{\wt X_n\}$-process which takes a
step with distribution $\wt P_1$ whenever the walk is at an old
point. We shall now show that this implies the WLLN for
the process $\{S_n\}$, i.e., \eqref{W12} itself.
Indeed, in the notation of the proof of Lemma \ref{lemP0}, the processes
$\{S_n,i(n)\}_{n \ge 1}$ and $\{\wt S_{\rho_n},\wt i(\rho_n)\}_{n \ge 1}$
have the same law. In particular,
\begin{eqnarray}
\left(\frac 1t \sup_{\l \le t} \mid S_\l \mid, i(n)\right) \text{ and }
\left(\frac 1t \sup_{\l \le t} \mid \wt S_{\rho_\l} \mid, \wt
i(\rho_n)\right)
\label{wc15}
\end{eqnarray}
have the same law. As explained in the proof of Lemma \ref{lemP0} we may
even assume that all these variables are defined on the same
probability space of sequences $\{A_\l\}_{\l \ge 0},\{B_\l\}_{\l
\ge 0}$ provided with the measure which makes all these
variables i.i.d. uniform on $[0,1]$. We denote this probability
measure by $\pp$. It follows from from the definition of the sequence $\rho_\l$ that for $q\ge 1$
$$
\pp\{\rho_{\l+1} -\rho_\l \ge q \mid \si (\rho_1,\rho_2, \dots, \rho_\l)\}
\le 2^{-(q-1)}.
$$
In turn, this implies that for some
constant $C \in (0,\infty)$,
\begin{eqnarray*}
\limsup_{t \to \infty} \sup_{\l \le t} \frac {\rho_\l}t =
\limsup_{t \to \infty} \frac {\rho_t}t
=\limsup_{t \to \infty} \frac {\sum_{\l=1}^t [\rho_\l - \rho_{\l-1}]}t
\le C \text{ with probability 1.}
\end{eqnarray*}
It follows that
\begin{eqnarray*}
\begin{array}{l}
\pp\{\sup_{\l \le t} \mid \wt S_{\rho_\l} \mid >\ep t\}\\
\le \pp\{\max_{\l \le t} \rho_\l > (C+1)t\} +
\pp\{\sup_{\l \le (C+1)t}\mid \wt S_\l \mid > \ep t\}.
\end{array}
\end{eqnarray*}
The last term on the right here tends to 0 as $t \to \infty$ and
$\ep > 0$ fixed, by virtue of \eqref{W12} with $S_t$ replaced by
 $\wt S_t$. Since also the first
term on the right here tends to 0 as $t \to \infty$, we conclude from
\eqref{wc15} that $(1/t) \sup_{\l \le t}\mid S_\l \mid \to 0$ in probability,
as desired.
\end{proof}



\begin{thebibliography}{99}



\bibitem{BW} \textbf{Benjamini I., Wilson D.B.:} \textit{Excited random walk},
Electron. Comm. Probab. 8 (electronic), (2003), 86--92.

\bibitem{Bi} \textbf{Billingsley P.:} \textit{Probability and
measure}, second ed. John Wiley \& Sons (1986).


\bibitem{DKL} \textbf{Durett R., Kesten H., Lawler G.:} \textit{Making money from fair games}, Random walks, Brownian motion, and interacting particle systems, 255--267, Progr. Probab. 28, Birkh\"auser Boston, Boston, MA, (1991).

\bibitem{D} \textbf{Dolgopyat D.:} \textit{Central limit theorem for excited random walk in the recurrent regime}, preprint,
http://www.math.umd.edu/~dmitry/papers.html


\bibitem{MR} \textbf{Merkl F., Rolles S.W.W.:} \textit{Recurrence of
edge reinforced random walk on a two-dimensional graph},
arXiv:math/07/03/027.

\bibitem{F} \textbf{Feller W.:} \textit{An introduction to
probability theory and its applications, vol.II}, second ed. John Wiley \&
Sons (1971).

\bibitem{FMM} \textbf{Fayolle G., Malyshev V.A.,
Menshikov M.V.:} \textit{Topics in the constructive theory of
countable Markov chains}, Cambridge University Press, Cambridge,
(1995), iv+169 pp.


\bibitem{IW} \textbf{Ikeda N., Watanabe S.:} \textit{Stochastic differential
equations and diffusion processes}, second edition, North-Holland
Mathematical Library, Amsterdam; Kodansha, Ltd., Tokyo, (1989), xvi+555 pp.

\bibitem{K} \textbf{Kent J.:} \textit{Some probabilistic properties of Bessel
functions}, Ann. Probab. 6, (1978), 760--770.

\bibitem{Kes} \textbf{Kesten H.:} \textit{Recurrence criteria for multi-dimensional Markov chains and multi-dimensional linear birth and death processes}, Advances in Appl. Probability 8, (1976), 58--87.

\bibitem{KL} \textbf{Kesten H., Lawler G.:} \textit{A necessary condition for making money from fair games}, Ann. Probab. 20,  (1992), 855--882.

\bibitem{KM} \textbf{Kosygina E., Mountford T.:} \textit{Limit laws of transient excited random walks on integers}, preprint, arXiv:0908.4356.


\bibitem{L} \textbf{Lebedev, N.N.:} \textit{Special functions and their applications},
Revised edition, translated from the Russian and edited by Richard A. Silverman. Dover
Publications, Inc., New York, (1972).

\bibitem{N}\textbf{ Neveu, J.:} Discrete-parameter martingales. Translated from the French by T. P. Speed. Revised edition. North-Holland Mathematical Library, Vol. 10. North-Holland Publishing Co., Amsterdam-Oxford; American Elsevier Publishing Co., Inc., New York, (1975).

\bibitem{RY} \textbf{Revuz D., Yor M.:} \textit{Continuous martingales
and Brownian motion}, Springer-Verlag, third ed. (1999).

\bibitem{W} \textbf{Williams, D.:}
\textit{Probability with martingales}, Cambridge Mathematical
Textbooks, Cambridge University Press, Cambridge, (1991).

\end{thebibliography}
\end{document}